\def\bd{\begin{description}}
	\def\ed{\end{description}}
\def\beq{\begin{equation}}
\def\eeq{\end{equation}}
\def\bea{\begin{eqnarray}}
\def\eea{\end{eqnarray}}
\def\beas{\begin{eqnarray*}}
	\def\eeas{\end{eqnarray*}}
\def\RR{\mathbb R}
\def\NN{\mathbb N}
\def\pmatrix{\left(\begin{array}}
\def\endpmatrix{\end{array}\right)}
\def\go{\hbox{$\displaystyle{\mbox{\ding{172}}}$}}
\def\G1{\hbox{$\displaystyle{\mbox{\ding{172}}}$}}
\newlength\figureheight
\newlength\figurewidth
\newcommand{\be}{\begin{equation}}
\newcommand{\ee}{\end{equation}}
\def\QED{\mbox{$\Box{~}$}}
\newtheorem{Pro}{Proposition}
\newtheorem{lemma}{Lemma}
\newtheorem{theorem}{Theorem}
\newtheorem{example}{Example}
\journal{Applied Numerical Mathematics}
\begin{document}

\begin{frontmatter}

\title{Conjugate-symplecticity properties of Euler--Maclaurin methods and their implementation on the Infinity Computer\tnoteref{t1}}

\tnotetext[t1]{This work was funded by the INdAM-GNCS 2018 Research Project
			``Numerical methods in optimization and ODEs'' (the authors are
			members of the INdAM Research group GNCS). Research of  F. Iavernaro and F. Mazzia was also supported by
			the Universit\`a degli Studi di Bari, project
			``Equazioni di Evoluzione: analisi qualitativa e metodi numerici''.}

\author[felix]{F. Iavernaro}
\ead{felice.iavernaro@uniba.it}
\author[frances]{F. Mazzia\corref{cor1}}
\ead{francesca.mazzia@uniba.it}
\cortext[cor1]{Corresponding author}
\author[maratyaro1,maratyaro2]{M.S.~Mukhametzhanov}
\ead{m.mukhametzhanov@dimes.unical.it}
\author[maratyaro1,maratyaro2]{Ya.D.~Sergeyev}
\ead{yaro@dimes.unical.it}

\address[felix]{Dipartimento di Matematica, Universit\`a degli Studi di Bari Aldo Moro, Italy}
\address[frances]{Dipartimento di Informatica, Universit\`a degli Studi di Bari Aldo Moro, Italy}
\address[maratyaro1]{DIMES, Universit\`a della  Calabria, Italy}
\address[maratyaro2]{Department of Software and Supercomputing
		Technologies Lobachevsky State University of Nizhni Novgorod,
		Russia}

\begin{abstract}
 Multi-derivative one-step methods based upon  Euler--Maclaurin integration formulae are considered for the solution of canonical Hamiltonian dynamical systems. Despite the negative result that simplecticity may not be attained by any multi-derivative Runge--Kutta methods, we show that the  Euler--MacLaurin method of order $p$ is conjugate-symplectic up to order $p+2$. This feature entitles them to play a  role in the context of geometric integration and, to make their implementation competitive with the existing integrators,  we  explore the possibility of computing the underlying higher order derivatives with the aid of the Infinity Computer.
 
\end{abstract}

\begin{keyword}
	 Ordinary differential equations \sep Hamiltonian systems \sep multi-derivative methods \sep  numerical infinitesimals \sep   Infinity Computer.
\MSC[2010] 65L06 \sep 65P10 \sep 65D25
\end{keyword}
\end{frontmatter}

\section{Introduction}
\label{sec:intro}
In the present work, we will consider the application of multi-derivative one-step methods for the numerical solution of canonical Hamiltonian problems
\begin{equation}\label{ham}
y' = J\nabla H(y), \qquad y(t_0)=y_0\in\RR^{2m},
\end{equation}
with
\begin{equation}\label{yJ}
y = \pmatrix{c} q\\ p\endpmatrix,\quad q,p\in\RR^m, \qquad J = \pmatrix{rr} O &I\\ -I &O\endpmatrix,
\end{equation}
where $q$ and $p$ are the generalized coordinates and conjugate momenta, \\$H:\RR^{2m} \rightarrow \RR$ is the Hamiltonian function and $I$ stands for the identity matrix of dimension $m$.
It is well-known that the flow $\varphi_t: y_0 \rightarrow y(t)$ associated with the dynamical system (\ref{ham}) is symplectic, namely its Jacobian satisfies
\begin{equation}
\label{symflow}
\frac{\partial \varphi_t(y)^\top}{\partial y} J \frac{\partial \varphi_t(y)}{\partial y} =J, \quad \mbox{for all }\, y\in\RR^{2m}.
\end{equation}
Symplecticity is a characterizing property of canonical Hamiltonian systems and has relevant implications on the geometric properties  of the orbits in the phase space. Consequently, the search of symplectic methods for the numerical integration of (\ref{ham})  forms a prominent branch of research. We recall that a one-step method $y_1=\Phi_h(y_0)$ ($h$ is the stepsize of integration) is called symplectic if its Jacobian matrix is symplectic, i.e. $\Phi_h$ satisfies the analog of (\ref{symflow}) with $\Phi_h(y)$ in place of $\varphi_t(y)$.  One prominent feature of  symplectic integrators is, by definition, the conservation of the symplectic differential 2-form associated with matrix $J$ in (\ref{yJ}) which, in turn, implies the conservation of all quadratic first integrals of a Hamiltonian system. Though they fail to conserve non quadratic Hamiltonian functions, a backward error analysis shows that, when implemented with constant stepsize and under regularity assumptions, they provide a near conservation of the Hamiltonian function over exponentially long times \cite{BG94} (see also \cite[page 366]{HLW06}).

The study of symplecticity in combination with multi-derivative R-K methods was initiated by  Lasagni \cite{La90} who provided a sufficient algebraic condition for a multi-derivative Runge--Kutta method to be symplectic. The brief investigation culminated with the work of Hairer, Murua, and Sanz Serna \cite{HaMuSS94} who showed that, for irreducible multi-derivative R--K methods, Lasagni's condition is also necessary but it may only be satisfied by standard R--K formulae.

{ Given this background, it does make sense to wonder whether one-step multi-derivative formulae may share some weaker conditions related to symplecticity. A method $y_1=\Phi_h(y_0)$ is conjugate to a symplectic method $y_1=\Psi_h(y_0)$ if a global change of coordinates $\chi_h(y)=y+O(h)$ exists such that $\Phi_h=\chi_h \circ \Psi_h \circ \chi_h^{-1}$. We observe that the  solution $\{y_n\}$ of a symplectic conjugate method satisfies
$y_n=\Phi^n_h(y_0)=(\chi_h \circ \Psi_h \circ \chi_h^{-1})^n(y_0)=\chi_h \circ \Psi_h^n \circ \chi_h^{-1}(y_0)$.
Consequently,  symplectic conjugate methods  inherit the long-time behavior of symplectic integrators. In the present work we are interested in a generalization of the conjugate-symplecticity property, introduced in \cite{HZ12}. A method $y_1=\Phi_h(y_0)$ of order $p$ is conjugate-symplectic  up to order $p+r$, with $r\ge 0$, if a global change of coordinates $\chi_h(y)=y+O(h^p)$ exists such that $\Phi_h=\chi_h \circ \Psi_h \circ \chi_h^{-1}$, with the map $\Psi_h$ satisfying 
\begin{equation}
\label{csup}
\Psi_h'(y)^TJ\Psi_h'(y)=J+O(h^{p+r+1}). 
\end{equation}
A consequence of property (\ref{csup}) is that the method $\Phi_h(y)$ nearly conserves all quadratic first integrals and the Hamiltonian function over time intervals of length $O(h^{-r})$ (see \cite{HZ12}).
}

This path of investigation is further motivated by the recent studies concerning the implementation of methods involving higher derivatives of the vector field on the Infinity Computer, {a new type of a supercomputer allowing one to work \emph{numerically} with infinite and infinitesimal numbers} \cite{AmIaMaMuSe16,MaSeIaAmMu16,Se13,Se15,SeMuMaIaAm16}. The final goal of this new approach is to improve the computational effort associated with the evaluation of the involved derivatives and make them competitive with more standard integrators. 
In this paper, the Infinity Computer is used for this purpose. It is based on the positional numeral system with the infinite radix $\G1$ (called \emph{grossone} and introduced as the number of elements of the set of natural numbers $\NN$) introduced  in \cite{Sergeyev,informatica,Lagrange} (see also recent surveys
\cite{UMI,EMS}). The first ideas that can be considered as predecessors to the Infinity Computing and based on the principle ``the part is less than the whole'' were studied by Bernard Bolzano (see \cite{bolzano} and a detailed analysis in \cite{Trlifajova}). It should be noted that the Infinity Computing theory is not related either to Cantor's cardinals and ordinals (\cite{Cantor})  or non-standard analysis and Levi-Civita field (\cite{Robinson,Levi-civita,Se19a}).

In the Infinity Computing, with the introduction
of \G1 in the mathematical language, all other symbols (like
$\infty$, Cantor's $\omega$, $\aleph_0, \aleph_1, \dots$,  etc.)
traditionally used to deal  with infinities and infinitesimals in different situations are
excluded from the language, because \G1 and other numbers constructed
with its help not only can replace all of them but can be
used with a higher accuracy. The \G1-based numeral system avoids indeterminate forms and situations similar to   $\infty + 1= \infty$
and $\infty - 1= \infty$ providing results ensuring that if $a$ is a
numeral written in this numeral system then for any $a$ (i.e., $a$
can be finite, infinite, or infinitesimal)   it follows $a+1>a$ and
$a-1<a$.

To construct a number $C$ in the \G1-based numeral system, we subdivide $C$
into groups corresponding to powers of \ding{172}:
\beq
C = c_{p_{m}}
\G1^{p_{m}} +  \ldots + c_{p_{1}} \G1^{p_{1}} +c_{p_{0}} \G1^{p_{0}}
+ c_{p_{-1}} \G1^{p_{-1}} + \ldots   + c_{p_{-k}}
\G1^{p_{-k}}.
\label{numberc}
\eeq
Then, we can write down the number $C$ as follows:
\beq
C = c_{p_{m}}
\G1^{p_{m}}    \ldots   c_{p_{1}} \G1^{p_{1}} c_{p_{0}} \G1^{p_{0}}
c_{p_{-1}} \G1^{p_{-1}} \ldots c_{p_{-k}}
\G1^{p_{-k}},
\label{grossnumber}
\eeq
where all numerals $c_i\neq0$
belong to a traditional numeral system and are called
\textit{grossdigits}, while numerals $p_i$ are  sorted in the decreasing order
with $ p_0=0$
\[
p_{m} >  p_{m-1}  > \ldots    > p_{1} > p_0 > p_{-1}  > \ldots
p_{-(k-1)}  >   p_{-k},
\]
and called \textit{grosspowers}.

The term having $p_0=0$ represents the finite part of $C$ since $c_0
\G1^0=c_0$. Terms having finite positive gross\-powers represent the
simplest infinite parts of~$C$. Analogously, terms having   negative
finite grosspowers represent the simplest infinitesimal parts of
$C$. For instance, the simplest infinitesimal used in this work as the integration step in the Euler method for computing the derivatives is  $\G1^{-1}=\frac{1}{\G1}$.

The \G1-based methodology   has   been successfully applied in
several areas of Mathematics and Computer Science: single and
multiple criteria optimization (see
\cite{Cococcioni,DeLeone,DeLeone_2}), handling ill-con\-di\-tio\-ning (see \cite{Gaudioso&Giallombardo&Mukhametzhanov(2018),
	homogeneity}),  cellular automata (see \cite{DAlotto,DAlotto_3}),
Euclidean and hyperbolic geometry (see \cite{Margenstern_3}),
percolation (see \cite{Iudin_2,DeBartolo}), fractals (see
\cite{Caldarola_1,Koch,DeBartolo}),  infinite
series and the Riemann zeta function (see
\cite{UMI,EMS,Zhigljavsky}), the first Hilbert
problem and supertasks (see \cite{Rizza,first,EMS,Rizza19a}), Turing machines
and probability (see \cite{Rizza_2,EMS,Sergeyev_Garro,Sergeyev_Garro_2}), etc.

An interesting peculiarity of the Infinity Arithmetic methodology in the context of this paper is that it allows one to work with black-box functions, namely the analytical expression of the function $f(y)$ may be unknown. In other  words, the function  $f$ can be given by a code or formula which are unknown to the user. He/she provides an argument $y$ and obtains a result $f(y)$ without any knowledge about how this result has been obtained. In particular, this means that the user can not calculate exact derivatives either analytically or symbolically. It has been shown that the Infinity Arithmetic methodology can successfully handle this situation in the context of numerical differentiation and   solution of  ordinary differential equations
(see \cite{AmIaMaMuSe16,Se13,SeMuMaIaAm16,Num_dif}).

The paper is organized as follows. In Section \ref{sec:EulMac} we introduce Euler--Maclaurin methods while in Section \ref{sec:consym} we show their  conjugate symplecticity properties. Section \ref{sec:inf} is devoted to the efficient computation of the derivatives on the Infinity Computer. Finally, some numerical illustrations are presented in Section \ref{sec:num} while Section \ref{sec:conc} contains some concluding remarks.

\section{Euler--MacLaurin methods}
\label{sec:EulMac}
Euler--Maclaurin methods are higher derivative collocation methods belonging to the class of Hermite-Obrechkov methods \cite[page 277]{HNW93}. When applied to the general initial value problem
\begin{equation}
\label{ivp}
y'(t)=f(y(t)), \quad y(t_0)=y_0 \in \RR^m,
\end{equation}
they yield a polynomial $\sigma(t_0+ch)$ approximating the true solution $y(t)$ in the interval $[t_0,t_0+h]$ ($h$ is the stepsize of integration)  defined by means of the following collocation conditions at the ends of the interval
\begin{equation}
\label{colloc}
\left\{
\begin{array}{l}
\sigma(t_0)=y_0, \\[.2cm]
\sigma^{(j)}(t_0) = { D_{j-1} f}(\sigma(t_0)), \quad j=1,\dots,s,\\[.2cm]
\sigma^{(j)}(t_0+h) = { D_{j-1} f}(\sigma(t_0+h)), \quad j=1,\dots,s.
\end{array}
\right.
\end{equation}
where, for any given vector $z$, $D_jf(z)$ denotes the total $j$-th time derivative of $f(y(t))$ evaluated at $y(t)=z$, with $y(t)$ formally satisfying (\ref{ivp}):
\begin{equation}
\label{ftimeder}
D_jf(z)= \left.\frac{\mathrm{d}^j}{\mathrm{d}t^j}f(y(t))\right|_{y(t)=z}. 
\end{equation}
We have used here the subscript $j$  to distinguish the operator defined in (\ref{ftimeder}) (Lie derivative) from the classical time-derivative operator of order $j$  which will be denoted, as usual, by $D^j$.    
The approximation at time $t_1=t_0+h$ is then yielded by $y_1=\sigma(t_0+h)\simeq y(t_0+h)+O(h^{p+1})$, with $p=2s$.
{
Notice that the right-hand side of (\ref{ftimeder}) may be expressed in terms of $y(t)$ via the relation (\ref{ivp}). 
 For example, for a given $z\in \RR^m$, 
$$
D_1f(z)=\left.f'(y(t))y'(t)\right|_{y(t)=z} = \left.f'(y(t))f(y(t))\right|_{y(t)=z} = f'(z)f(z),
$$ 
where $f'$ denotes the Jacobian matrix of the function $f$. More in general, the analytical computation of $D_jf(z)=D_1(D_{j-1}f(z))$  involves a tensor of order $j+1$.} This considerably raises the computational cost associated with the implementation of the method as long as higher derivatives are considered.   We will see that the use of the Infinity Computer circumvents this issue by producing a precise value of { $D_jf(z)$} without explicitly evaluating its analytical expression in terms of the derivatives of $f$.

These integrators derive their name from the well-known Euler--Maclaurin integration formula: if $m$ and $n$ are natural numbers and $g(x)$ with $x\in \RR$ is a regular function defined on $[m,n]$,
\begin{equation}
\label{EM}
\begin{array}{rcl}
\displaystyle \int_m^n g(x)\mathrm{d} x &=& \displaystyle  \frac{g(m)+g(n)}{2}+\sum_{i=m+1}^{n-1}g(i) \\[.3cm]
&& \displaystyle  -\sum_{k=1}^{s-1} \frac{B_{2k}}{(2k)!} \left( g^{(2k-1)}(n) - g^{(2k-1)}(m)\right)+R,
\end{array}
\end{equation}
where $s\ge 1$ and $B_{2k}$ is the $2k$-th Bernoulli number.\footnote{In formula (\ref{EM}) we have used $s-1$ in place of $s$ to make the argument consistent with formulae (\ref{colloc}). When $s=1$ we get the standard composite trapezoidal quadrature rule. The first even Bernoulli number are $B_2 = 1/6$, $B_4 = -1/30$, $B_6 = 1/42$, $B_8 = -1/30,\dots$.} The remainder $R$ is bounded by
$$
|R| \le \frac{2}{(2 \pi)^{2s-2}} \int_m^n \left| g^{(2s-1)}(x) \right| \mathrm{d}x.
$$
We now consider the integral form of (\ref{ivp}) in the interval $[t_0,t_0+h]$, namely
$$
y(t_0+ch)=y(t_0)+h \int_0^cy'(t_0+\tau h)\mathrm{d} \tau \equiv y(t_0)+ h\int_0^cf(y(t_0+\tau h))\mathrm{d} \tau .
$$
Setting $c=1$ and evaluating the integral by means of (\ref{EM}) with $m=0$ and $n=1$ yields
$$
\begin{array}{rcl}
y(t_1)&=&\displaystyle  y(t_0)+h\int_0^1 y'(t_0+\tau h)\mathrm{d} \tau \\[.3cm]
& =& \displaystyle  y(t_0)+\frac{h}{2}\left( f(y(t_1))+f(y(t_0)) \right) -\sum_{k=1}^{s-1} \frac{h^{2k}B_{2k}}{(2k)!}  \left( {D^{2k-1}f(y(t_1))-D^{2k-1}f(y(t_0)) } \right)+R.
\end{array}
$$
An approximation $y_1\simeq y(t_1)$ is obtained by neglecting the remainder term $R=O(h^{2s+1})$. Taking into account (\ref{ftimeder}) and considering that $y(t_0)=y_0$, we arrive at the one-step Euler-Maclaurin family of methods
\begin{equation}
\label{EMd}
\begin{array}{rcl}
y_1&=&\displaystyle  y_0+\frac{h}{2}\left( f(y_1)+f(y_0) \right) -\sum_{k=1}^{s-1} \frac{h^{2k}B_{2k}}{(2k)!} \left( { D_{2k-1}f(y_1)- D_{2k-1}f(y_0) }\right).
\end{array}
\end{equation}
When $s=1$, (\ref{EMd}) becomes the trapezoidal method while, for $s=2$ and $s=3$ we get the fourth and sixth order methods
\begin{equation}
\label{EM4}
\begin{array}{rcl}
y_1&=&\displaystyle  y_0+\frac{h}{2}\left( f(y_1)+f(y_0) \right) -\frac{h^2}{12}\left( y''_1 - y''_0\right),
\end{array}
\end{equation}
\begin{equation}
\label{EM6}
\begin{array}{rcl}
y_1&=&\displaystyle  y_0+\frac{h}{2}\left( f(y_1)+f(y_0) \right) -\frac{h^2}{12}\left( y''_1- y''_0\right)+\frac{h^4}{720}\left( y^{(4)}_1 - y^{(4)}_0\right),
\end{array}
\end{equation}
where, to simplify the notation, we have set $y^{(2k)}_{i}= {D_{2k-1}f(y_{i})}$, $i=0,1$.

\section{Conjugate symplecticity properties}
\label{sec:consym}
To prove that the Euler-Maclaurin method (\ref{EMd}) is conjugate to a symplectic method up to order $2s+2$, we show that the map $y_1=\Psi_h(y_0)$  associated with (\ref{EMd}) is such that $\Psi_h(y)=\Phi_h(y)+O(h^{2s+3})$, where $y_1=\Phi_h(y_0)$ is a suitable B-series integrator satisfying property (a) of the following lemma.
\begin{lemma}\cite{CFM06}
	\label{CScond}
	Assume that problem (\ref{ivp}) admits a quadratic first integral $Q(y)=y^\top S y$ (with $S$ a symmetric matrix)  and is solved by a $B$-series integrator $\Phi_h(y)$. The following properties are equivalent:
	\begin{enumerate}
		\item[(a)] $\Phi_h(y)$ has a modified first integral of the form $\widetilde Q(y)= Q(y)+O(h)$;
		\item[(b)]  $\Phi_h(y)$ is formally conjugate to a symplectic $B$-series method.
	\end{enumerate}
\end{lemma}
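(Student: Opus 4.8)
The plan is to pass from the discrete map $\Phi_h$ to its modified (interpolating) vector field and to read both properties off that field, working inside the group and Lie-algebra structure of $B$-series, as in \cite{CFM06}. Since $\Phi_h$ is a consistent $B$-series integrator, backward error analysis furnishes a unique formal $B$-series vector field $f_h = f + h\,f_2 + h^2 f_3 + \cdots$ (its logarithm in the Butcher group) whose formal time-$h$ flow coincides with $\Phi_h$. The elementary fact I would exploit is that a flow preserves a function exactly if and only if that function is a first integral of the generating field; hence $\Phi_h$ admits a formal modified invariant $\widetilde Q = Q + O(h)$ precisely when $f_h$ has a first integral of the form $Q + O(h)$. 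Moreover, conjugation by a near-identity $B$-series map $\chi_h = \mathrm{id} + O(h)$ conjugates generators, so $\Psi_h = \chi_h \circ \Phi_h \circ \chi_h^{-1}$ has modified field $(\chi_h)_{*}f_h$, and $\Psi_h$ is symplectic exactly when $(\chi_h)_{*}f_h$ is a Hamiltonian $B$-series field, i.e.\ its coefficient character satisfies the algebraic ``Hamiltonian'' relations over rooted trees. In this language the lemma becomes the assertion that $f_h$ carries a first integral $Q + O(h)$ if and only if $(\chi_h)_{*}f_h$ is Hamiltonian for some admissible $\chi_h$.

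The implication (b)~$\Rightarrow$~(a) is then short: being a symplectic $B$-series integrator, $\Psi_h$ conserves every quadratic first integral of (\ref{ivp}) exactly (this is the classical fact that symplecticity is equivalent to preservation of quadratic invariants for $B$-series integrators), so in particular it conserves $Q$; pulling back through $\Phi_h = \chi_h^{-1}\circ \Psi_h \circ \chi_h$, the function $\widetilde Q := Q \circ \chi_h = Q + O(h)$ is an exact formal invariant of $\Phi_h$.

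For (a)~$\Rightarrow$~(b) I would build $\chi_h$ recursively in powers of $h$. Writing $\chi_h = \exp(\mathcal{L}_{\xi_h})$ with $\xi_h = h\,\xi_1 + h^2\xi_2 + \cdots$ to be determined, I assume that $\xi_1,\dots,\xi_{k-1}$ have been chosen so that $(\chi_h)_{*}f_h$ obeys the Hamiltonian relations through order $h^{k-1}$; the obstruction at order $h^k$ is then a single tree-indexed ``symplecticity defect'', and updating $\xi_h$ by the term $h^k\xi_k$ shifts that defect by an expression which is linear in $\xi_k$. The role of hypothesis (a) is precisely to force this defect into the image of that linear map, so that a suitable $\xi_k$ cancels it; iterating produces $\chi_h$ with $(\chi_h)_{*}f_h$ Hamiltonian, i.e.\ $\Phi_h$ formally conjugate to a symplectic $B$-series method. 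The main difficulty is exactly this last point: proving that the order-by-order symplecticity defect is always removable by a $B$-series change of variables. That is the technical core of \cite{CFM06}, and it rests on the explicit description of Hamiltonian $B$-series fields inside the Lie algebra of $B$-series (equivalently, on the combinatorics of the Hopf algebra of rooted trees).

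Two observations show why the full machinery is genuinely needed. The naive attempt to ``straighten'' the modified invariant, i.e.\ to pick $\chi_h$ with $Q \circ \chi_h = \widetilde Q$, merely yields a conjugate map that conserves the single quadratic $Q$ exactly, which is far weaker than symplecticity. And the argument is cleanest when run purely algebraically, at the level of $B$-series coefficient characters, with (\ref{ivp}) serving as a universal test equation that carries the quadratic invariant; this is what lets one and the same $\chi_h$ work uniformly and keeps the recursion self-contained.
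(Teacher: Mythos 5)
First, a point of comparison: the paper itself does not prove Lemma~\ref{CScond} at all --- it is quoted verbatim from \cite{CFM06} (``This result was stated in \cite{CFM06}\dots''), so there is no internal proof to measure your argument against; the only fair benchmark is whether your sketch would stand on its own. Your direction (b)~$\Rightarrow$~(a) essentially does: a symplectic $B$-series map conserves quadratic first integrals exactly (the classical algebraic characterization $a(u\circ v)+a(v\circ u)=a(u)a(v)$ gives both properties simultaneously), and pulling $Q$ back through the conjugacy $\chi_h=\mathrm{id}+O(h)$ produces the modified invariant $\widetilde Q=Q\circ\chi_h=Q+O(h)$; one only has to check that $Q\circ\chi_h$ is a modified invariant of the admissible (elementary-Hamiltonian/$S$-series) form required in \cite{CFM06}, which is routine since $\chi_h$ is itself a $B$-series map.

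The genuine gap is in (a)~$\Rightarrow$~(b). Your recursion ``choose $\xi_k$ to cancel the order-$h^k$ symplecticity defect'' is the right shape of argument, but the decisive step --- that hypothesis (a) forces the tree-indexed defect at each order into the image of the linear map $\xi_k\mapsto$ (change of defect) --- is exactly the theorem, and you assert it rather than prove it. Nothing in your outline explains why a modified invariant of the \emph{single} quadratic $Q$ (for the given problem) yields the universal coefficient identities needed to make the conjugated series symplectic, nor how the kernel/cokernel of the adjoint-type map acting on coefficient characters is controlled; this is precisely the combinatorial content of \cite{CFM06} (the description of Hamiltonian, i.e.\ symplectic, characters inside the Lie algebra of $B$-series and the cohomological-style solvability of the order conditions). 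Your own closing caveat concedes this, so as it stands the proposal is an accurate road map with the central step outsourced to the reference --- acceptable in the same spirit as the paper's citation, but not a proof. If you want to close the gap you must either reproduce the algebraic argument of \cite{CFM06} (working at the level of coefficient maps on rooted trees, with (\ref{ivp}) as universal test problem, and verifying solvability of the defect equation order by order) or find an independent proof of that solvability; the rest of your write-up can stay as is.
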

Here, \emph{formally conjugate} means that the power series yielding the conjucagy needs not be convergent.
This result was stated in \cite{CFM06} and subsequently used in \cite{Ha08} to derive the conjugate symplecticity property of symmetric multistep methods.

In terms of the characteristic polynomials
$$
\rho(z)=z-1, \qquad \sigma(z)=\frac{1}{2}(z+1)
$$
formula (\ref{EMd}) reads, for a generic time $t_n$,
\begin{equation}
\label{EMrs}
\rho(E)y_n = h\sigma(E)f(y_n)-\sum_{k=1}^{s-1} \frac{h^{2k}B_{2k}}{(2k)!}\rho(E) { D_{2k-1}f(y_n)},
\end{equation}
where $E$ denotes the shift operator: $E(y_n)=y_{n+1}$.

For an analytic function $u(t)$ and a stepsize $h>0$,  we introduce the shift operator $E_h(u(t))=u(t+h)$ and recall the relation 
\begin{equation}
\label{ED}
E_h=e^{hD}=\sum_{k=0}^\infty \frac{h^k}{k!}D^k.
\end{equation}
\begin{theorem}
	\label{main_result}
	The map  { $y_1=\Psi_h(y_0)$} associated with the one-step method (\ref{EMrs}) admits a $B$-series expansion and is conjugate-symplectic up to order $2s+2$.
\end{theorem}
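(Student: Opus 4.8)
The plan is to prove the two claims — that $\Psi_h$ admits a $B$-series expansion and that it is conjugate-symplectic up to order $2s+2$ — along the route sketched just before the statement: identify a companion $B$-series integrator $\Phi_h$ with $\Psi_h=\Phi_h+O(h^{2s+3})$ to which Lemma~\ref{CScond} applies, and then transfer the conclusion back to $\Psi_h$ by a conjugacy argument. For the first claim, note that (\ref{EMrs}) (equivalently (\ref{EMd})) defines $\Psi_h$ implicitly with $\Psi_0=\mathrm{id}$, and that its right-hand side involves only $f$ and the Lie derivatives $D_{2k-1}f$, each of which is a finite combination of elementary differentials; hence the fixed-point iteration solving (\ref{EMrs}) returns a $B$-series at every step and so does its limit (equivalently, Hermite--Obrechkoff/collocation methods are $B$-series methods). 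Two further features will be used: $\Psi_h$ is \emph{symmetric} — swapping $y_0\leftrightarrow y_1$ and $h\to-h$ leaves (\ref{EMd}) invariant, since $h^{2k}$ is even while $D_{2k-1}f(y_1)-D_{2k-1}f(y_0)$ changes sign — and it has order $p=2s$, so its $B$-series agrees with that of the exact flow on all rooted trees of order $\le 2s$. Consequently its modified equation reads $\dot y=\hat f(y,h)=f(y)+h^{2s}f^{[2s]}(y)+h^{2s+2}f^{[2s+2]}(y)+\cdots$, the odd-order corrections being absent by symmetry.

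Since the condition (\ref{csup}) concerns the Jacobian $\Psi_h'$, I would apply Lemma~\ref{CScond} not to (\ref{ivp}) directly but to the system augmented by two copies of its variational equation $\delta_i'=f'(y)\delta_i$: this augmented system is again of the form (\ref{ivp}), it is solved by $\Psi_h$ together with its Jacobian (a $B$-series method, inheriting symmetry), and $Q=\delta_1^\top J\delta_2$ is a quadratic first integral of it whose near-conservation, $Q\mapsto Q+O(h)$, is equivalent to $\Psi_h$ being conjugate-symplectic in the sense of (\ref{csup}). Now define $\Phi_h$ to be the $B$-series integrator whose modified vector field is $f+h^{2s}f^{[2s]}+h^{2s+2}g_{2s+2}+h^{2s+4}g_{2s+4}+\cdots$, the leading correction $f^{[2s]}$ being \emph{forced} to coincide with that of $\Psi_h$ (any change at order $h^{2s+2}$ or beyond perturbs the one-step map only at order $h^{2s+3}$), while $g_{2s+2},g_{2s+4},\dots$ remain free. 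Truncating after the forced term already yields $\Psi_h=\Phi_h+O(h^{2s+3})$; the remaining freedom is then spent solving, order by order in $h$, the cohomological equations saying that $\widetilde Q=Q+O(h^{2s})$ is an exact (formal) first integral of the modified vector field of $\Phi_h$ on the augmented system — the only equation whose solvability is \emph{not} at our discretion being the order-$h^{2s}$ one, $L_f q_{2s}=-\nabla Q^\top f^{[2s]}$, where $L_f\phi=\nabla\phi^\top f$, all higher obstructions being absorbable into $g_{2s+2},g_{2s+4},\dots$. Lemma~\ref{CScond} then provides a formal conjugacy $\Phi_h=\chi_h\circ\Psi_h^{\mathrm{sympl}}\circ\chi_h^{-1}$ with $\chi_h=\mathrm{id}+O(h^{2s})$ and $\Psi_h^{\mathrm{sympl}}$ symplectic; conjugating $\Psi_h$ by the same $\chi_h$ and using $\Psi_h=\Phi_h+O(h^{2s+3})$ (pre- and post-composition by $\mathrm{id}+O(h^{2s})$ maps does not lower the order of an $O(h^{2s+3})$ perturbation) gives $\chi_h^{-1}\circ\Psi_h\circ\chi_h=\Psi_h^{\mathrm{sympl}}+O(h^{2s+3})$, whose Jacobian satisfies (\ref{csup}) with $p=2s$, $r=2$ — i.e.\ $\Psi_h$ is conjugate-symplectic up to order $2s+2$.

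The main obstacle is the structural fact used above: the forced leading correction $f^{[2s]}$ of the Euler--Maclaurin modified equation must satisfy $\nabla Q^\top f^{[2s]}\in\mathrm{range}(L_f)$ (equivalently, $f^{[2s]}$ is conjugate-Hamiltonian to leading order). A linear computation fixes only part of it: for $f(y)=Ly$ the method acts as the rational map $R(hL)$ with $R(z)=\dfrac{P(z)+z/2}{P(z)-z/2}$ and $P(z)=\sum_{k=0}^{s-1}\dfrac{B_{2k}}{(2k)!}z^{2k}$, and since $P$ is even one has $R(z)R(-z)=1$, so the method is exactly symplectic on linear Hamiltonian systems — but this only pins down the ``tall-tree'' component of $f^{[2s]}$. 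Closing the gap for the remaining order-$(2s+1)$ elementary differentials is the delicate point, and it is precisely here that the symmetry of $\Psi_h$ enters: the conjugate-symplecticity defect is an antisymmetric functional of the $B$-series coefficients that vanishes on trees of order $\le 2s$ (exact-flow coefficients) and, by the parity forced by symmetry, also on those of order $2s+1$ and $2s+2$, so its first possibly nonzero value occurs only at order $2s+3$. Equivalently, one may invoke the known fact that a symmetric $B$-series method of order $p$ is conjugate-symplectic up to order $p+2$; the construction above re-derives it within the Euler--Maclaurin family.
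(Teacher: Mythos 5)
Your overall strategy (build a nearby $B$-series integrator $\Phi_h$ with $\Psi_h=\Phi_h+O(h^{2s+3})$, invoke Lemma~\ref{CScond}, and transfer conjugate-symplecticity back to $\Psi_h$) is the same as the paper's, and your B-series-existence and order/symmetry observations, as well as the final transfer argument, are fine. The gap is exactly at the point you yourself flag as ``the delicate point'': you must show that the leading term $f^{[2s]}$ of the Euler--Maclaurin modified equation is Hamiltonian modulo a coboundary $[f,X]$ (equivalently, that $\nabla Q^\top f^{[2s]}$ lies in the range of $L_f$), and neither of your two arguments establishes this. The linear computation $R(z)R(-z)=1$ controls, as you admit, only part of $f^{[2s]}$. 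The parity/symmetry argument removes only the \emph{odd}-power terms of the modified equation, i.e.\ the contribution at order $h^{2s+2}$; the term $h^{2s}f^{[2s]}$ is an even-power term, so symmetry places no restriction on it, and the symplecticity defect of an order-$2s$ symmetric method at order $h^{2s+1}$ is precisely the class of $f^{[2s]}$ modulo Hamiltonian fields plus coboundaries, which symmetry does not force to vanish. The fallback you invoke --- ``a symmetric $B$-series method of order $p$ is conjugate-symplectic up to order $p+2$'' --- is not a known theorem and is false in general for $p\ge 4$: a dimension count at order $5$ (nine rooted trees; the Hamiltonian conditions $c(u\circ v)+c(v\circ u)=0$ cut out a $3$-dimensional subspace, and adding coboundaries $[f,X]$ with $X$ of order $4$ raises the dimension to at most $7$) shows there exist combinations $d_5$ of order-$5$ elementary differentials that are not conjugate-Hamiltonian, and the time-$h$ flow of $f+h^4d_5$ is a symmetric order-$4$ $B$-series map that is therefore not conjugate-symplectic up to order $6$. (This is also why the analysis of specific symmetric methods such as Lobatto IIIA or energy-preserving collocation in \cite{HZ12} requires genuine work rather than an appeal to symmetry.) A secondary, smaller issue is your assertion that all obstructions beyond order $h^{2s}$ can be ``absorbed'' into the free coefficients $g_{2s+2},g_{2s+4},\dots$: to apply Lemma~\ref{CScond} you need a modified invariant of $\Phi_h$ to all orders, and this absorbability is asserted, not proved.

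The paper closes precisely this gap by using the specific Euler--Maclaurin structure rather than symmetry. Via the Bernoulli generating function identity (\ref{bergen1}) it derives the modified equation (\ref{mod_eqEM}), observes that the defect terms $h^{2k}D(D^{2k-1}-D_{2k-1})f(u)$ are $O(h^{2s+2})$, and hence that $\Psi_h$ is $O(h^{2s+3})$-close to the flow $\Phi_h$ of (\ref{mod_eq1}), whose perturbation consists of pure even-order total time derivatives $h^{2k}D^{2k}f(u)=h^{2k}u^{(2k+1)}$. It is this total-derivative structure (not symmetry) that allows the explicit integration-by-parts construction, as in Theorem~4.10 of \cite{HLW06} for symmetric linear multistep methods, of a modified quadratic first integral $\widetilde Q=Q+O(h^{2s})$ of $\Phi_h$ valid to all orders; only then does Lemma~\ref{CScond} give formal conjugate-symplecticity of $\Phi_h$, which the closeness estimate degrades to order $2s+2$ for $\Psi_h$. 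To repair your proof you would need to carry out an analogous computation identifying $f^{[2s]}$ (it is $\frac{B_{2s}}{(2s)!}D^{2s}f$, up to the sign convention) and exhibiting the corresponding modified invariant or conjugating change of variables, rather than appealing to symmetry alone.
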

\begin{proof}
The existence of a $B$-series expansion for $y_1=\Psi_h(y_0)$ may be directly deduced from \cite{HaMuSS94}, where
a $B$-series representation of a generic multi-derivative Runge-Kutta method has been obtained.
From the generating function of Bernoulli numbers (see, for example \cite{HaWa08})
	$$
	\frac{z}{e^z-1} = \sum_{k=0}^\infty B_k \frac{z^k}{k!},
	$$
	we get, considering that $B_1=-1/2$ and $B_k=0$ for  $k$ odd,
	\begin{equation}
	\label{bergen1}
	\frac{z\sigma(e^z)}{\rho(e^z)}= \frac{1}{2}\frac{z(e^z+1)}{e^z-1} = \frac{z}{e^z-1} +\frac{z}{2}= 1+ \sum_{k=1}^\infty B_{2k} \frac{z^{2k}}{(2k)!}.
	\end{equation}
	In the spirit of backward analysis, we look for an analytical function $u(t)$ formally satisfying the difference equation (\ref{EMrs}) that is, by virtue of (\ref{ED}),
	$$
	\rho(e^{hD})u(t) = h\sigma(e^{hD})f(u(t))-\sum_{k=1}^{s-1} \frac{B_{2k}}{(2k)!}h^{2k}\rho(e^{hD})D_{2k-1} f(u(t)).
	$$
	Multiplying both sides of the previous equation by $D\rho(e^{hD})^{-1}$ yields
	$$
	\dot u(t) = hD\rho(e^{hD})^{-1}\sigma(e^{hD})f(u(t))-\sum_{k=1}^{s-1} \frac{B_{2k}}{(2k)!}h^{2k}DD_{2k-1} f(u(t)),
	$$
	and, by taking into account (\ref{bergen1}), we finally arrive at
	\begin{equation}
	\label{mod_eqEM}
	\dot u(t) = \left(1+ \sum_{k=s}^{\infty} \frac{B_{2k}}{(2k)!}h^{2k}D^{2k}\right) f(u(t)) +\sum_{k=1}^{s-1} \frac{B_{2k}}{(2k)!}h^{2k}D(D^{2k-1}-D_{2k-1}) f(u(t)).
	\end{equation}
	Equation (\ref{mod_eqEM}) coupled with the initial condition $u(t_0)=y_0$ is nothing but the {\em modified differential equation} associated with the Euler--MacLaurin method of order $2s$, so that $u(t_0+nh)=y_n$.  

Since $u(t)=y(t)+O(h^{2s})$, we see that $(D^{2k-1}-D_{2k-1}) f(u(t))=O(h^{2s})$ and hence the solution $u(t)$ of (\ref{mod_eqEM}) is $O(h^{2s+2})$-close to the solution of the following initial value problem
\begin{equation}
	\label{mod_eq1}
	\dot u(t) = \left(1+ \sum_{k=s}^{\infty} \frac{B_{2k}}{(2k)!}h^{2k}D^{2k}\right) f(u(t)), \qquad u(t_0)=y_0.
\end{equation}
We may interpret (\ref{mod_eq1}) as the modified equation of a one-step method $y_1=\Phi_h(y_0)$, where $\Phi_h$ is evidently the time-$h$ flow associated with (\ref{mod_eq1}). Expanding the solution of  (\ref{mod_eq1}) in Taylor series, we get
	$$
	\begin{array}{rcl}
	\Phi_h(y_0)&=& \displaystyle y_1=u(t_0+h)=y_0+hf(y_0)+ \sum_{k=s}^{\infty} \frac{B_{2k}}{(2k)!}h^{2k+1}D^{2k}f(y_0)\\[.4cm]
	&& \displaystyle  + \frac{h^2}{2!} f'(y_0)f(y_0)+ \sum_{k=s}^{\infty} \frac{B_{2k}}{(2k)!}h^{2k+2}D^{2k+1} f(y_0) +\dots .
	\end{array}
	$$
	where $f'(y)$ is the Jacobian of $f(y)$ and we have set $D^{r}f(y_0)=\left.D^{r}f(u(t))\right|_{t=t_0}$. Collecting like powers of $h$ in the above expression yields a formal power series expansion in the stepsize $h$, that is a $B$-series expansion.
	
	To show that $\Phi_h(y)$ admits a modified first integral $\widetilde Q(y)=Q(y)+O(h^{2s})$, we follow the same flow of computation appearing in \cite[Theorem 4.10 on page 591]{HLW06}, that states an analogous property for symmetric linear multistep methods. We first notice that
	$$
	\left(1+ \sum_{k=s}^{\infty} \frac{B_{2k}}{(2k)!}z^{2k}\right)^{-1}= 1+ \sum_{k=s}^{\infty} \gamma_kz^{2k},
	$$
	for suitable coefficients $\gamma_k$. Thus (\ref{mod_eq1}) is tantamount to
	\begin{equation}
	\label{mod_eq2}
	\left(1+ \sum_{k=s}^{\infty} \gamma_k h^{2k}D^{2k}\right) \dot u(t) =  f(u(t)).
	\end{equation}
	Multiplying both sides of (\ref{mod_eq2}) by the term $u(t)^\top S$ yields
	\begin{equation}
	\label{mod_eq3}
	\frac{1}{2}\frac{\mathrm{d}}{\mathrm{d}t}Q(u(t))+ \sum_{k=s}^{\infty} \gamma_k h^{2k}u(t)^\top S u^{(2k+1)}(t) =  0,
	\end{equation}
	where we have taken into account that $2u(t)^\top S\dot u(t)=\dot Q(u(t))$ and $z^\top S f(z)=0$ for any $z\in \RR^m$, since $Q(y)$ is a first integral of the original system (\ref{ivp}).   A repeated use of the property (the explicit dependence on the time $t$ is omitted to simplify the notation)
	$$
	{u^{(i)}}^\top S u^{(j)}=\frac{\mathrm{d}}{\mathrm{d}t} \left( {u^{(i)}}^\top S u^{(j-1)}\right)-{u^{(i+1)}}^\top S u^{(j-1)},
	$$
	with, in particular,
	$$
	{u^{(i)}}^\top S u^{(i+1)}=\frac{1}{2}\frac{\mathrm{d}}{\mathrm{d}t} \left( {u^{(i)}}^\top S u^{(i)}\right),
	$$
	allows us to cast each term $u(t)^\top S u^{(2k+1)}(t)$ in (\ref{mod_eq3}) as
	$$
	u^\top S u^{(2k+1)}=\frac{\mathrm{d}}{\mathrm{d}t}\left(u^\top S u^{(2k)} - \dot u^\top S u^{(2k-1)}+\dots+(-1)^k\frac{1}{2}{u^{(k)}}^\top S u^{(k)}  \right).
	$$
	We observe that the sum in brackets on the right hand side may be formally cast as a function of $u(t)$ by replacing all the derivatives with the aid of the modified differential equation (\ref{mod_eq1}). After this substitution, denoting by
	$$
	Q_k(u(t)) = 2 \gamma_k u(t)^\top S u^{(2k+1)}(t)
	$$
	and $\widetilde Q(u)= Q(u)+\sum_{k=s}^{\infty} h^{2k} Q_k(u)$, from (\ref{mod_eq3}) we finally obtain $\frac{\mathrm{d}}{\mathrm{d}t} \widetilde Q(u(t))=0$ which concludes the proof, {since $\Psi_h(y)=\Phi_h(y)+O(h^{2s+3})$.} \QED
\end{proof}

\section{Computation of the derivatives} 
\label{sec:inf}
One drawback with these implicit methods is the computation of  high order derivatives. Symbolic or automatic differentiation are often preferred to finite differences techniques involving terms in $y$ and $y'$, which suffer from numerical instability when the increment becomes small. 
This  drawback is overcome on the Infinity Computer and hereafter we illustrate two possible approaches in order to compute the $k$-th derivative of $y(t)$ at time $t_i$.

\smallskip

\textbf{Strategy (a)}.  This strategy was first proposed in \cite{Se13}. 
We perform $k$ infinitesimal steps starting at 
time $t_i$ using the explicit Euler formula with stepsize $\bar h=\G1^{-1}$ as follows:
$$
y_{i,1} = y_i + \go^{-1}f(y_i),\quad  y_{i,2} = y_{i,1} + \go^{-1}f(y_{i,1}),\dots,\quad 
y_{i,k} = y_{i,k-1} + \go^{-1}f(y_{i,k-1}).
$$
Then,  the values of the needed derivatives can be obtained by means of the
forward dif\-feren\-ces $F_{\bar h}^k[y_{i,0}, y_{i,1},\dots, y_{i,k}]$, with $\bar h =
\go^{-1}$ as follows
\begin{equation}
\label{S1}
y^{(k)}(t_i) =
\frac{F_{\mbox{\tiny{\ding{172}}}^{-1}}^k[y_{i,0}, y_{i,1},\dots, y_{i,k}]}{\go^{-k}} +
O(\go^{-1})
\end{equation}
where
\begin{equation}
\label{Delta1}
F_{\mbox{\tiny{\ding{172}}}^{-1}}^k[y_{i,0}, y_{i,1},\dots, y_{i,k}] =
\sum_{j=0}^k (-1)^j {k \choose j}{y_{i,k-j}},~~y_{i,0}=y_i. \quad
\end{equation}
As was proven in \cite{Se13}, since  the error of the approximation is $O(\go^{-1})$, the finite part of the value
$\frac{F_{\mbox{\tiny{\ding{172}}}^{-1}}^k[y_{i,0}, y_{i,1},\dots, y_{i,k}]}{\go^{-k}}$   gives
the \textit{exact} derivative $y^{(k)}(t_i)$. For a more detailed
description of the numerical computation  of exact derivatives on
the Infinity Computer, see \cite{Se13}.
\smallskip

\textbf{Strategy (b)}.  Let us propose another strategy for computing the exact derivatives, where finite differences may be employed directly on the value of $f$ as follows:
\begin{equation}
\label{S2}
y^{(k)}(t_i) = D^{k-1} f(y_i) =
\frac{F_{\mbox{\tiny{\ding{172}}}^{-1}}^{k-1}[f(y_{i,0}), f(y_{i,1}),\dots, f(y_{i,k-1})]}{\go^{-(k-1)}} +
O(\go^{-1})
\end{equation}

Now, let us prove that  formulae (\ref{S1}) and (\ref{S2}) are equivalent.
\begin{Pro}
	Let us suppose that for the solution $y(t)$ of the ordinary differential equation (\ref{ivp}) it is known the value $y_i = y(t_i)$ at the point $t_i$. Then  formulae (\ref{S1}) and (\ref{S2}) are equivalent.
\end{Pro}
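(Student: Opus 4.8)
The plan is to prove something slightly stronger than asked: that the two expressions on the right-hand sides of (\ref{S1}) and (\ref{S2}) are \emph{identically} equal, not merely equal up to the shared $O(\go^{-1})$ remainder. Everything reduces to a telescoping identity for the undivided forward difference of (\ref{Delta1}), combined with the linearity of that operator and the structure of the explicit Euler iteration $y_{i,j+1}=y_{i,j}+\go^{-1}f(y_{i,j})$.

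First I would record the Pascal-type recurrence for the operator in (\ref{Delta1}). Splitting $\binom{k}{j}=\binom{k-1}{j}+\binom{k-1}{j-1}$ and reindexing the second sum (setting $l=j-1$), one obtains
\[
F^{k}_{\mbox{\tiny{\ding{172}}}^{-1}}[y_{i,0},\dots,y_{i,k}]
= F^{k-1}_{\mbox{\tiny{\ding{172}}}^{-1}}[y_{i,1},\dots,y_{i,k}]
- F^{k-1}_{\mbox{\tiny{\ding{172}}}^{-1}}[y_{i,0},\dots,y_{i,k-1}].
\]
Because $F^{k-1}_{\mbox{\tiny{\ding{172}}}^{-1}}$ is linear in its list of arguments, the difference on the right equals $F^{k-1}_{\mbox{\tiny{\ding{172}}}^{-1}}[\,y_{i,1}-y_{i,0},\dots,y_{i,k}-y_{i,k-1}\,]$. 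Now insert the Euler relation $y_{i,j+1}-y_{i,j}=\go^{-1}f(y_{i,j})$ and pull the common scalar $\go^{-1}$ out of the linear operator:
\[
F^{k}_{\mbox{\tiny{\ding{172}}}^{-1}}[y_{i,0},\dots,y_{i,k}]
= \go^{-1}\, F^{k-1}_{\mbox{\tiny{\ding{172}}}^{-1}}[f(y_{i,0}),\dots,f(y_{i,k-1})].
\]
Dividing by $\go^{-k}$ gives precisely
\[
\frac{F^{k}_{\mbox{\tiny{\ding{172}}}^{-1}}[y_{i,0},\dots,y_{i,k}]}{\go^{-k}}
= \frac{F^{k-1}_{\mbox{\tiny{\ding{172}}}^{-1}}[f(y_{i,0}),\dots,f(y_{i,k-1})]}{\go^{-(k-1)}},
\]
and since by the very definition (\ref{ftimeder}) one has $y^{(k)}(t_i)=D^{k-1}f(y_i)$ for the solution of (\ref{ivp}), formulae (\ref{S1}) and (\ref{S2}) share the same leading term and the same $O(\go^{-1})$ error; in particular their finite parts coincide, so the formulae are equivalent.

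I do not anticipate a real obstacle. The only points requiring care are purely bookkeeping: checking that the reindexed second sum in the Pascal step indeed reproduces $-F^{k-1}_{\mbox{\tiny{\ding{172}}}^{-1}}$ on $y_{i,0},\dots,y_{i,k-1}$ with the correct sign, and keeping in mind that $F^{k}_{\mbox{\tiny{\ding{172}}}^{-1}}$ as given by (\ref{Delta1}) is the \emph{undivided} difference, so no spurious powers of $\go^{-1}$ enter the recurrence. An equivalent route would phrase both sides through the shift operator $E$ as $(E-\mathrm{id})^{k}$ applied to the Euler sequence and use $(E-\mathrm{id})y_{i,j}=\go^{-1}f(y_{i,j})$; I would nonetheless keep the direct combinatorial argument above as the primary proof, since it is entirely self-contained.
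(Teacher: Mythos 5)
Your argument is correct, and it establishes exactly the identity the paper relies on, namely
\begin{equation*}
F_{\mbox{\tiny{\ding{172}}}^{-1}}^k[y_{i,0},\dots,y_{i,k}]
= \G1^{-1}\, F_{\mbox{\tiny{\ding{172}}}^{-1}}^{k-1}[f(y_{i,0}),\dots,f(y_{i,k-1})],
\end{equation*}
but by a slightly different route. The paper proves this identity by induction on $k$: it checks $k=1$ and works out $k=2$ explicitly in grossone arithmetic, then in the induction step applies the same Pascal-type recurrence you state and invokes the hypothesis on the two shifted sublists before reassembling them into $F^{k-1}$ of the $f$-values. You instead collapse the induction into a single step: after one application of the Pascal recurrence you use the linearity of the undivided difference operator to rewrite the difference of the two $F^{k-1}$ terms as $F^{k-1}$ of the consecutive increments $y_{i,j+1}-y_{i,j}$, and then substitute the Euler relation $y_{i,j+1}-y_{i,j}=\G1^{-1}f(y_{i,j})$ and pull out the common factor $\G1^{-1}$. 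This is a genuinely non-inductive derivation of the same lemma; it is shorter, makes transparent that the two divided quotients in (\ref{S1}) and (\ref{S2}) are \emph{identically} equal (so the equivalence is exact, not only up to the shared $O(\G1^{-1})$ term), and avoids the base-case bookkeeping. What the paper's inductive presentation buys in exchange is the explicit $k=2$ computation, which it deliberately includes to familiarize the reader with the grossone formalism. Your reindexing of the Pascal step and the sign of the second sum check out, so there is no gap; just make sure, as you note, that $F^{k}_{\mbox{\tiny{\ding{172}}}^{-1}}$ is used throughout as the undivided difference of (\ref{Delta1}).
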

\begin{proof}
	Since  formulae (\ref{S1}) and (\ref{S2}) differ only in the forward differences, then in order to prove the proposition, it is sufficient to demonstrate that the following equality holds true
	\begin{equation}
	\label{forward_equation}
	F_{\mbox{\tiny{\ding{172}}}^{-1}}^k[y_{i,0}, y_{i,1},\dots, y_{i,k}] = \G1^{-1} \cdot F_{\mbox{\tiny{\ding{172}}}^{-1}}^{k-1}[f(y_{i,0}),\dots, f(y_{i,k-1})],
	\end{equation}
	where $y_{i,0} = y_i$. Let us use the mathematical induction to prove it. For $k=1$, the assertion is trivial. However, since the first  meaningful case is for $k=2$ and in order to make the reader more acquainted with grossone based formalism, we consider this case in full detail.  By using  formulae (\ref{S1})--(\ref{S2}) for $y_{i,0},~y_{i,1}$, and $y_{i,2}$ we obtain
	
	\begin{eqnarray*}
		F_{\mbox{\tiny{\ding{172}}}^{-1}}^2[y_{i,0}, y_{i,1}, y_{i,2}] &=& y_{i,2}-2y_{i,1}+y_{i,0}\\
		&=& y_{i,0} + \G1^{-1} \cdot (f(y_{i,0})+f(y_{i,1})) - 2(y_{i,0} + \G1^{-1}) \cdot f(y_{i,0}) + y_{i,0} \\
		&=& \G1^{-1} \cdot (f(y_{i,1})-f(y_{i,0})) = \G1^{-1} \cdot F_{\mbox{\tiny{\ding{172}}}^{-1}}^1[f(y_{i,0}), f(y_{i,1})].
	\end{eqnarray*}
	
	Suppose now that (\ref{forward_equation}) holds for $k-1,~k\geq 3$. We get
	\begin{eqnarray*}
		F_{\mbox{\tiny{\ding{172}}}^{-1}}^k[y_{i,0},\dots, y_{i,k}] &=& F_{\mbox{\tiny{\ding{172}}}^{-1}}^{k-1}[y_{i,1}, \dots, y_{i,k}] - F_{\mbox{\tiny{\ding{172}}}^{-1}}^{k-1}[y_{i,0}, \dots, y_{i,k-1}]  \\
		&=&  \G1^{-1} \cdot F_{\mbox{\tiny{\ding{172}}}^{-1}}^{k-2}[f(y_{i,1}), \dots, f(y_{i,k-1})] \\
		&& - \G1^{-1} \cdot F_{\mbox{\tiny{\ding{172}}}^{-1}}^{k-2}[f(y_{i,0}), \dots, f(y_{i,k-2})] \\
		&=& \G1^{-1} \cdot F_{\mbox{\tiny{\ding{172}}}^{-1}}^{k-1}[f(y_{i,0}), \dots, f(y_{i,k-1})]. 
	\end{eqnarray*}
	
	
	This completes the proof. \QED
\end{proof}

The advantage of strategy (b) with respect to strategy (a) is that the use of  formula (\ref{S2}) in place of (\ref{S1}) decreases the computational costs due to the following reasons:
\begin{description}
	\item 1. It is not necessary to compute the value $y_{i,k}$ for (\ref{S2}), whereas it should be calculated in (\ref{S1}).
	\item 2. All the computations using (\ref{S1}) should be performed using the grosspowers up to $-k$. In contrast,  formula (\ref{S2}) allows us to work with the numbers using only the grosspowers up to $-(k-1)$.
\end{description}
Let us consider the following example from \cite{SeMuMaIaAm16} in order to illustrate these issues.
\begin{example}
	Let us find the first 3 derivatives $y'(t_0)$, $y''(t_0)$, and $y'''(t_0)$ of the solution $y(t)$ at the point $t_0 = 0$ of the following initial value problem:
	\begin{equation}
	\label{example1}
	\frac{dy}{dt} = \frac{y-2ty^2}{1+t},~y(t_0) = 0.4,
	\end{equation}
	whose exact solution is 
	\begin{equation}
	\label{sol1}
	y(t) = \frac{1+t}{2.5+t^2}.
	\end{equation}
	Differentiating (\ref{sol1}) we get the exact values of the following derivatives:
	$$y'(t_0)=0.4, \quad y''(t_0)=-0.32, \quad y'''(t_0)=-0.96.$$
	Now, let us find these derivatives using strategy (a). First, we perform 3 iterations of the Euler method with the integration step $\bar h = \G1^{-1}$, truncating all values after the grosspower $-3$:
	\begin{eqnarray*}
		y_{1} &=& y_0 + \go^{-1}f(t_0,y_0) = 0.4 + 0.4\go^{-1},\\
		y_2 &=& y_1 + \go^{-1}f(t_0+\go^{-1},y_1)=0.4+0.8\go^{-1}-0.32\go^{-2}-0.32\go^{-3},\\
		y_3 &=& y_2 + \go^{-1}f(t_0+2\go^{-1},y_2)=0.4+1.2\go^{-1}-0.96\go^{-2}-1.92\go^{-3}.
	\end{eqnarray*}
	Applying  formulae (\ref{S1}), (\ref{Delta1}), we obtain
	\begin{eqnarray*}
		y'(t_0) &\simeq& \G1 \cdot F_{\mbox{\tiny{\ding{172}}}^{-1}}^1[y_{0}, y_{1}] = \G1\cdot(y_1-y_0) \\
		&=& \G1 \cdot (0.4 + 0.4\go^{-1} - 0.4) = 0.4,\\[.3cm]
		y''(t_0) &\simeq& \G1^2 \cdot F_{\mbox{\tiny{\ding{172}}}^{-1}}^2[y_{0}, y_{1}, y_2] =\G1^2 \cdot (y_2-2y_1+y_0) \\
		&=& \G1^2 \cdot (0.4+0.8\go^{-1}-0.32\go^{-2}-0.32\go^{-3} - 2(0.4 + 0.4\go^{-1}) + 0.4) \\
		&=& -0.32 -0.32\go^{-1} = -0.32 + O(\G1^{-1}), \\[.3cm]
		y'''(t_0) &\simeq& \G1^3 \cdot F_{\mbox{\tiny{\ding{172}}}^{-1}}^3[y_{0}, y_{1}, y_2, y_3] = \G1^3 \cdot (y_3-3y_2+3y_1-y_0) \\
		&=&  \G1^3 \cdot (0.4+1.2\go^{-1}-0.96\go^{-2}-1.92\go^{-3}  \\
		&& -3(0.4+0.8\go^{-1}-0.32\go^{-2}-0.32\go^{-3}) + 3(0.4 + 0.4\go^{-1}) - 0.4)\\
		&=& -0.96,
	\end{eqnarray*}
	from where we can extract the exact values of $y'(t_0)$, $y''(t_0)$, and $y'''(t_0)$ as finite parts of $0.4$, $-0.32-0.32\go^{-1}$, and $-0.96$, respectively.
	
	Let us now apply strategy (b). Here, we need to perform $k-1$ iterations of the Euler method, obtaining the values $y_1$ and $y_2$, truncating them after the grosspower $-2$:
	\begin{eqnarray*}
		f(t_0,y_0) &=& 0.4,\\
		f(t_0+\go^{-1},y_{1}) &=& 0.4 -0.32\go^{-1}-0.32\go^{-2},\\
		f(t_0+2\go^{-1},y_2) &=& 0.4-0.64\go^{-1}-1.6\go^{-2}.
	\end{eqnarray*}
	Applying  formulae (\ref{S2}), we obtain
	\begin{eqnarray*}
		y'(t_0) &=& \G1^0 \cdot F_{\mbox{\tiny{\ding{172}}}^{-1}}^0[f(t_0,y_{0})] = f(t_0, y_0) = 0.4,\\[.3cm]
		y''(t_0) &\approx& \G1^1 \cdot F_{\mbox{\tiny{\ding{172}}}^{-1}}^1[f(t_0,y_{0}), f(t_0+\go^{-1},y_1)]\\
		&=& \G1 \cdot (f(t_0+\G1^{-1}, y_1)-f(t_0,y_0))\\ 
		&=& \G1 \cdot (0.4 -0.32\go^{-1}-0.32\go^{-2} - 0.4) = -0.32 - 0.32\go^{-1} \\
		&=& -0.32 + O(\G1^{-1}),
	\end{eqnarray*}
	\begin{eqnarray*}
		y'''(t_0) &\approx& \G1^2 \cdot F_{\mbox{\tiny{\ding{172}}}^{-1}}^2[f(t_0,y_{0}), f(t_0+\go^{-1},y_1), f(t_0+2\go^{-1}, y_2)]\\
		&=& \G1^2 \cdot (f(t_0+2\G1^{-1},y_2)-2f(t_0+\G1^{-1},y_1)+f(t_0,y_0)) \\
		&=& \G1^2 \cdot (0.4-0.64\go^{-1}-1.6\go^{-2} - 2(0.4 -0.32\go^{-1}-0.32\go^{-2}) + 0.4) \\
		&=& -0.96,
	\end{eqnarray*}
	from where again we can extract the exact values of $y'(t_0)$, $y''(t_0)$, and $y'''(t_0)$ as finite parts of $0.4$, $-0.32-0.32\go^{-1}$, and $-0.96$, respectively.
	
	It should be noticed that the value $y_2$ cannot be truncated after the grosspower $-2$ using the first strategy, because the coefficient of $\G1^{-3}$ at the value $y_2$ is used also for computing $y'''(t_0)$. On the contrary, strategy (b) allows us to use the grosspowers up to $-2$, which decreases the computational cost of the procedures computing the $2-$nd and the $3-$rd derivatives.
\end{example}



\section{Numerical illustrations}
\label{sec:num}
In the present section,  the Euler--Maclaurin formulae of order four \eqref{EM4} and six \eqref{EM6} are applied to a few well-known test problems to highlight their conservation properties. In particular, the long-time behavior of their numerical solutions is compared with that of the numerical solutions computed by the (symplectic) Gauss methods of order four and six. 

To  advance the solution at each integrations step, the nonlinear equations (\ref{EM4}) and (\ref{EM6}) have been solved by means of a modified Newton method, using the same Jacobian of the trapezoidal scheme, which is appropriate since the neglected terms are $O(h^2)$. In order to  preserve the  conservation properties, the nonlinear scheme must be  iterated to attain the highest possible accuracy in double precision. Moreover, the derivatives have been computed using strategy (b) defined in the previous section.

The numerical experiments have been performed using Matlab  R2017b.  Though Gauss methods generally exhibit a better accuracy for a given stepsize and order, we stress that a fair comparison of the actual performance of the two classes of methods cannot take aside the computational complexity associated with their implementation and, in particular, the effort in solving the underlying nonlinear systems at each step of the integration procedure. In this respect, we notice that while the dimension of the nonlinear systems associated with the Gauss methods is proportional to the number of stages and hence to the considered order, this is not the case for the Euler--Maclaurin formulae, for which the dimension remains the same, i.e. that of the underlying continuous problem, independently of the order.  This study is a delicate issue due to the non-homogeneous platforms the two codes have been run. At the moment, the  only available emulator of the Infinity Computer Arithmetic is a c++ prototype callable in Matlab through a suitable interface which prohibits us to have an efficient implementation and therefore to execute a fair comparisons with other techniques. We stress  that the c++ emulator has been only used for the computation of the derivatives, while all the other operations have been performed using the standard double precision floating point arithmetic available in Matlab.   We checked the obtained results with those provided by computing the derivatives analytically. 

{ With this premise,  to gain some preliminary insight into the potentialities of multi-derivative methods, we have prepared a couple of experiments involving the pendulum and the Kepler problems, for which the periodic nature of the solution allows us to accurately estimate the error. In particular, taking into account the discussion above, we compare the fourth-order Euler-Maclaurin and Gauss methods on the basis of their computational complexity but under the assumption that the derivative $f'(y)f(y)$ needed by the the former integrator be analytically evaluated. The results have been discussed in the next two subsections and collected in Figure \ref{wpd_fig}.}



\subsection{Nonlinear pendulum}
\label{pend}

\begin{figure}
	\centerline{
		\includegraphics[width=8.0cm,height=4cm]{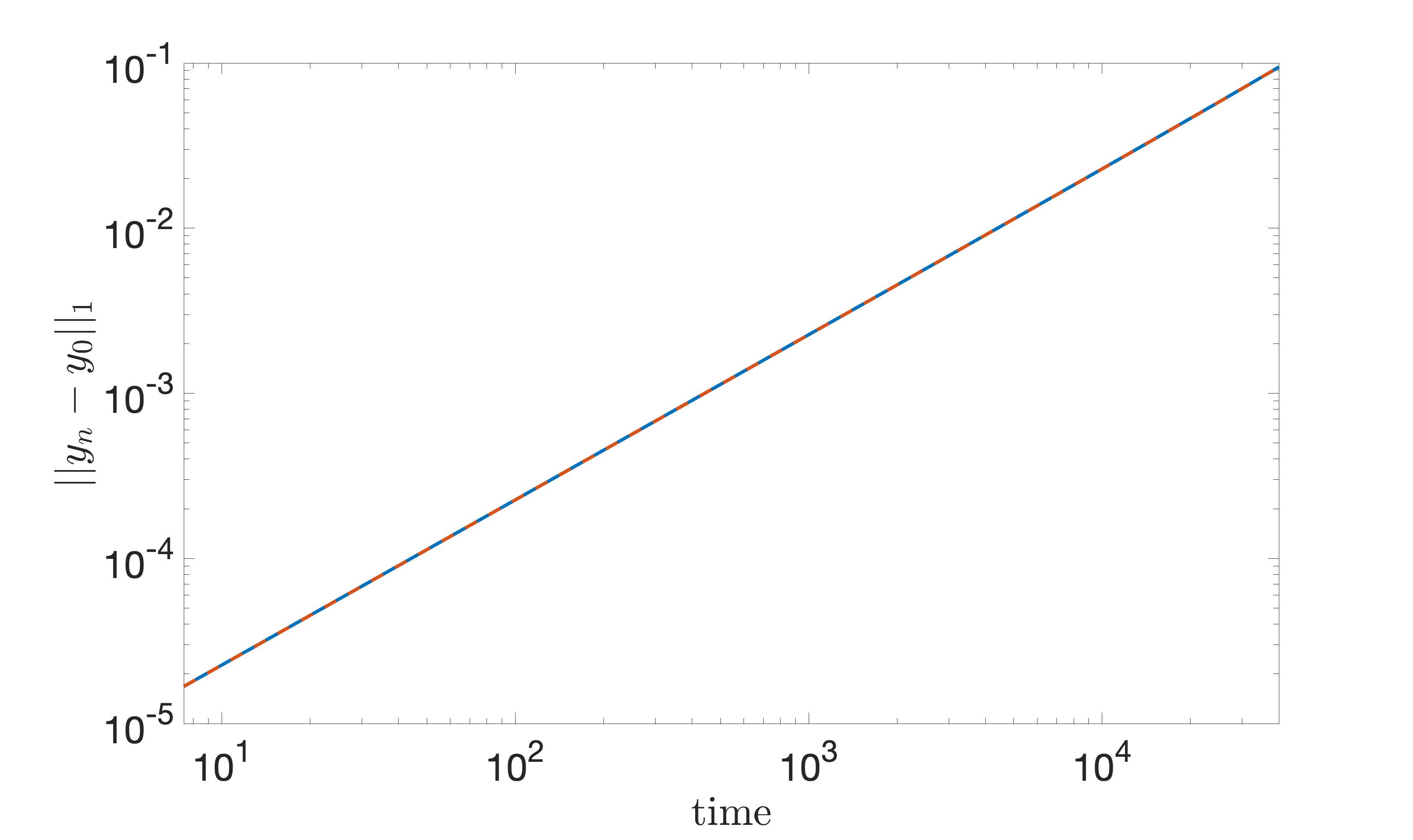} \hspace*{-.6cm}
		\includegraphics[width=8.0cm,height=4cm]{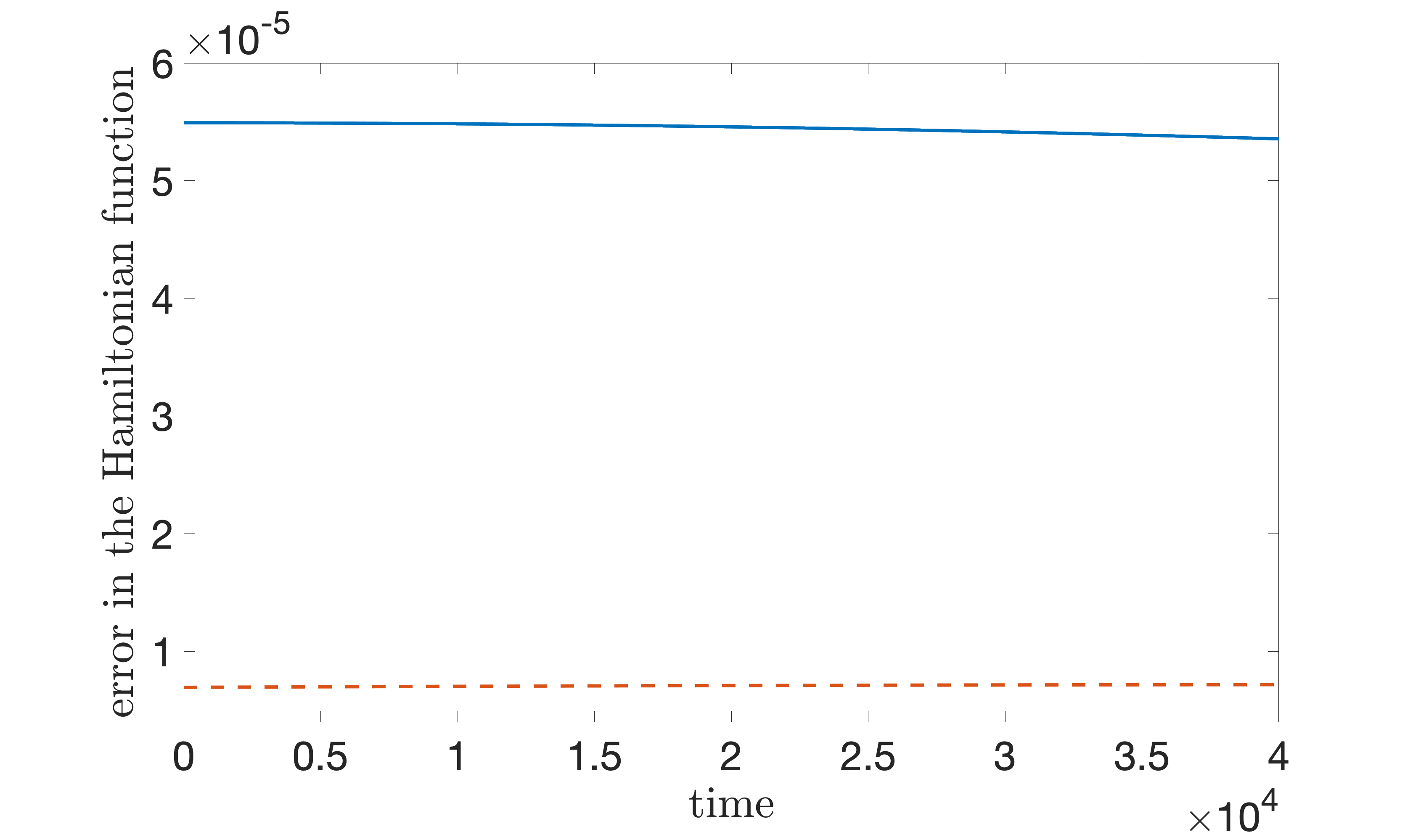}}
	\caption{Results for the fourth-order Euler-Maclaurin method (solid lines) and Gauss method (dashed lines) applied to the pendulum problem.}
	\label{pend4_fig}
\end{figure}

As a first example, we consider the dynamics of a pendulum under influence of gravity. It is usually described in terms of the angle $q$ that the pendulum forms with its  stable rest position:
\begin{equation}\label{pendulum}
\ddot q + \sin q = 0,
\end{equation}
where $p=\dot q$ is the angular velocity. The Hamiltonian function associated with  (\ref{pendulum}) is
\begin{equation}
\label{pendulumH}
H(q,p) = \frac{1}2 p^2 -\cos q.
\end{equation}
An initial condition $(q_0,p_0)$ such that $|H(q_0,p_0)|<1$ gives rise to a periodic solution $y(t)= (q(t),p(t))^\top$ corresponding to oscillations of the pendulum around the straight-down stationary position. In particular, starting at $y_0=(q_0,0)^\top$, the period of oscillation may be expressed in terms of the complete elliptical integral of the first kind as
$$
T(q_0) = \int_0^1\frac{\mathrm{d}z}{\sqrt{(1-z^2)(1-\sin^2(q_0/2)z^2)}}.
$$
We choose $q_0=\pi/2$ to which there corresponds a period $T=7.416298709205487$. We use the fourth and sixth order Euler--Maclaurin  and Gauss  methods with stepsize $h=T/28$ to integrate the problem over $5 \cdot10^3$ periods for the fourth-order  methods and  $4\cdot10^5$ periods for the  sixth-order methods. We then compute the errors $\|y_n-y_0\|_1$  in the solution and $\max_{1 \le j \le 28} |H(y_{n+j})-H(y_0)|$ in the energy function at times multiples of the period $T$, that is for $n=28k$, with $k=1,2,\dots$. Figures \ref{pend4_fig} and \ref{pend6_fig} report the obtained results. On the left plot, we can see that the error in the solution as time increases is essentially the same for  the fourth-order formulae and quite similar for the sixth-order formulae. A near conservation of the energy function is observable on the right of each figure. 

\begin{figure}
	\centerline{
		\includegraphics[width=8.0cm,height=4cm]{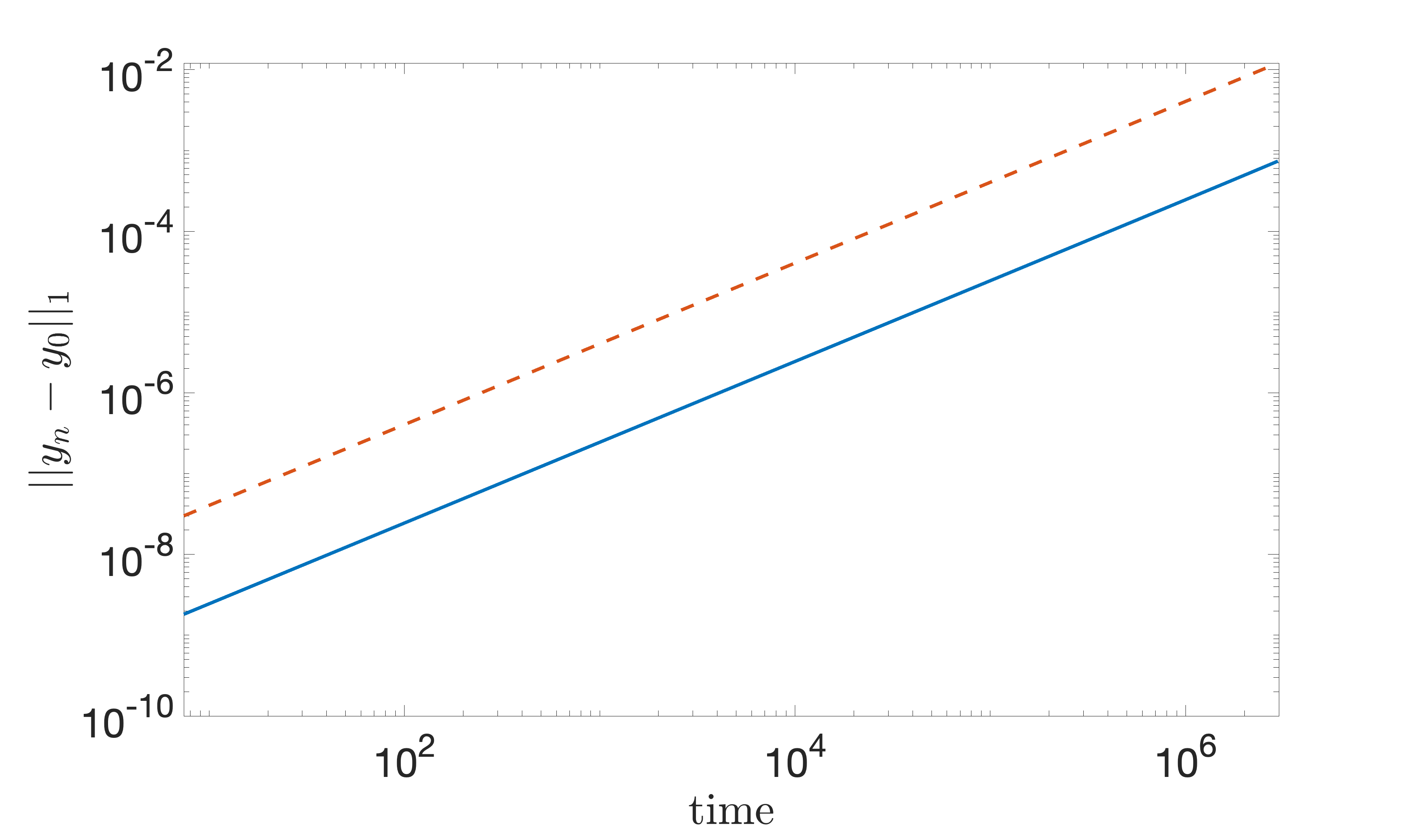} \hspace*{-.6cm}
		\includegraphics[width=8.0cm,height=4cm]{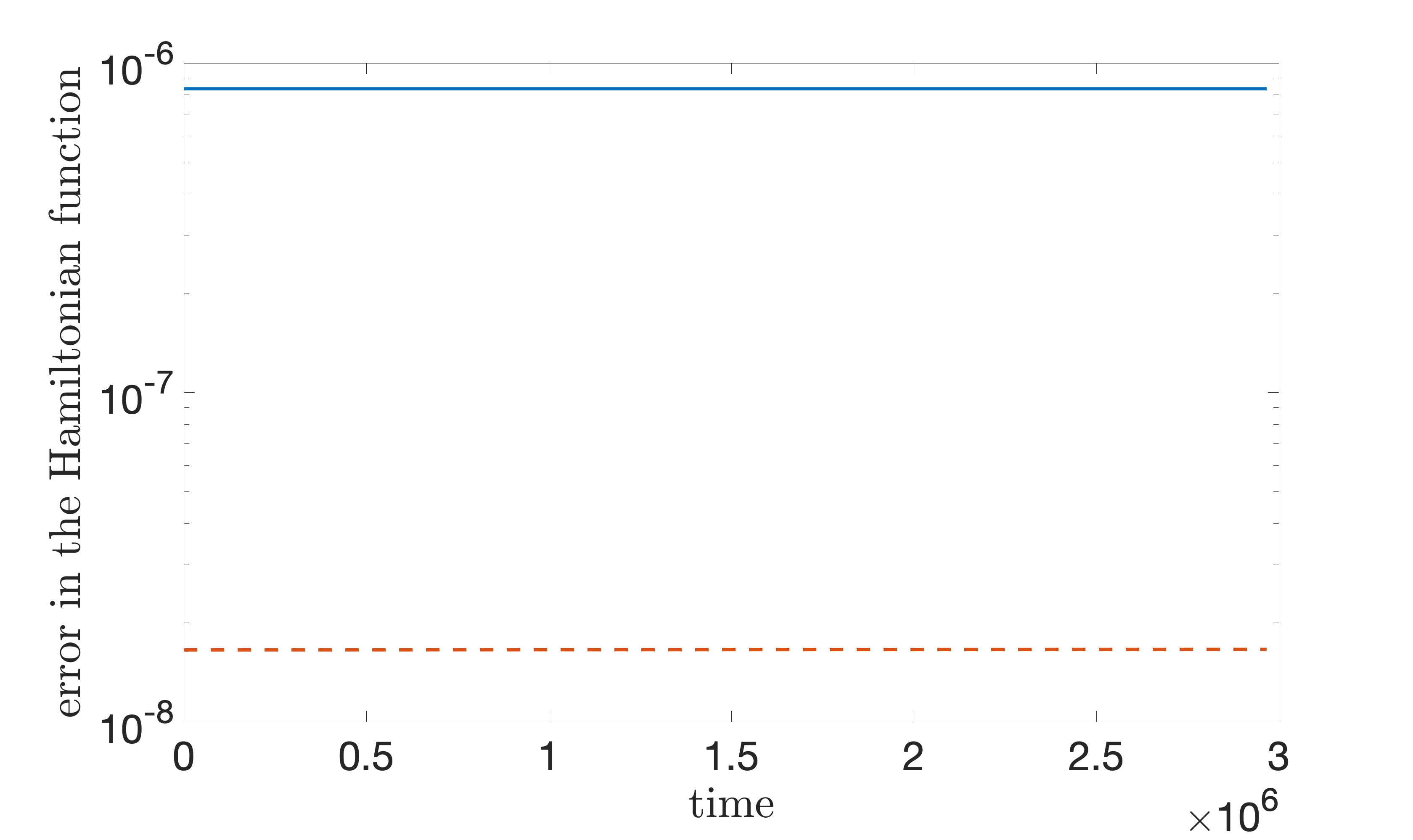}}
	\caption{Results for the sixth-order Euler-Maclaurin method (solid lines) and Gauss method (dashed lines) applied to the pendulum problem.}
	\label{pend6_fig}
\end{figure}

{The work precision diagram in the left picture of  Figure \ref{wpd_fig} plots the execution times versus the  accuracy in the numerical solutions obtained by the fourth-order Euler-Maclaurin and Gauss methods applied to integrate the pendulum problem on the interval $[0,10^2 T]$. The results show that the Euler-Maclaurin formula is indeed competitive. 
We emphasize that  the Gauss method has been implemented by using the efficient techniques described in \cite{BFCI14}.}
\begin{figure}
	\centerline{
		\includegraphics[width=8.0cm,height=4cm]{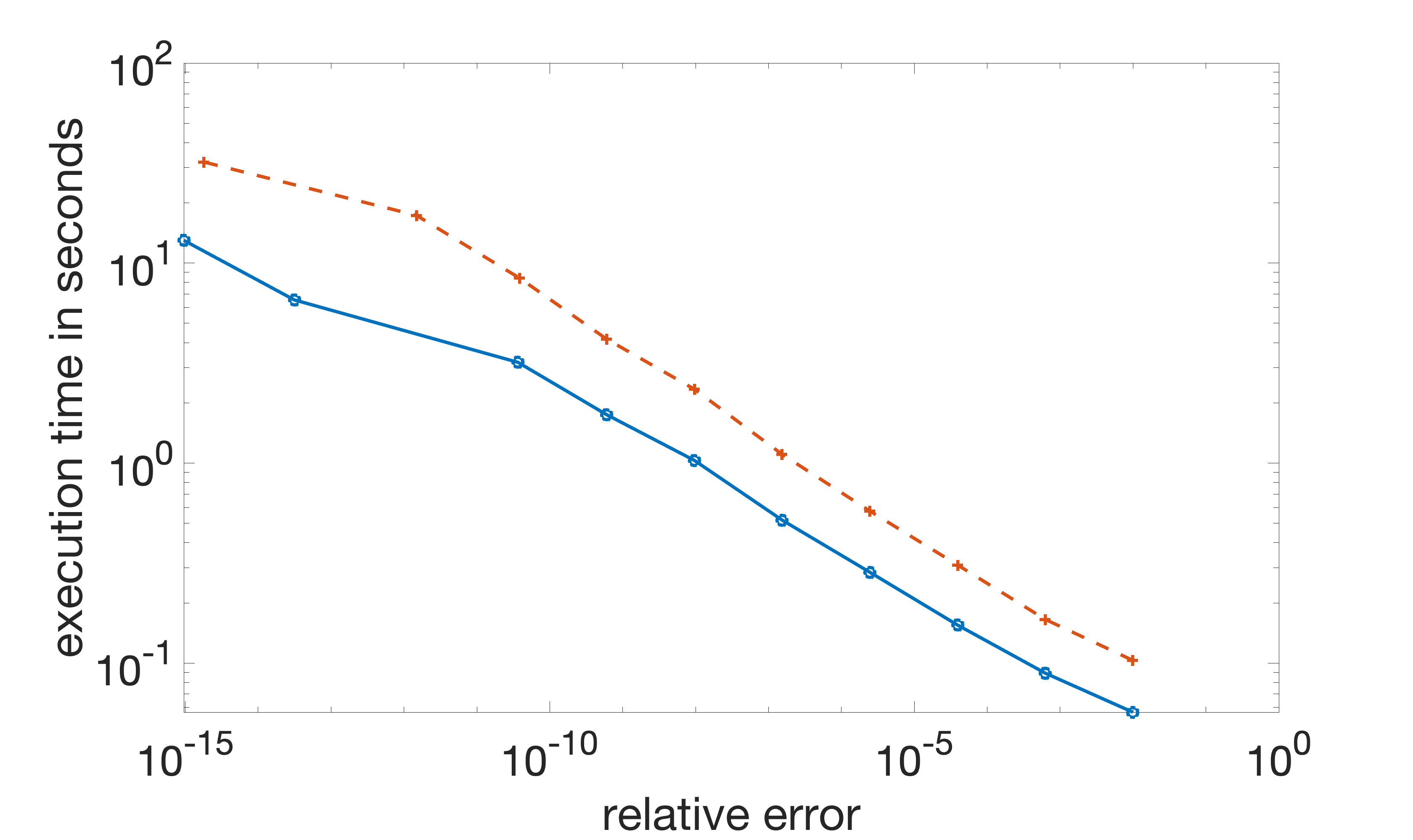} \hspace*{-.6cm}
		\includegraphics[width=8.0cm,height=4cm]{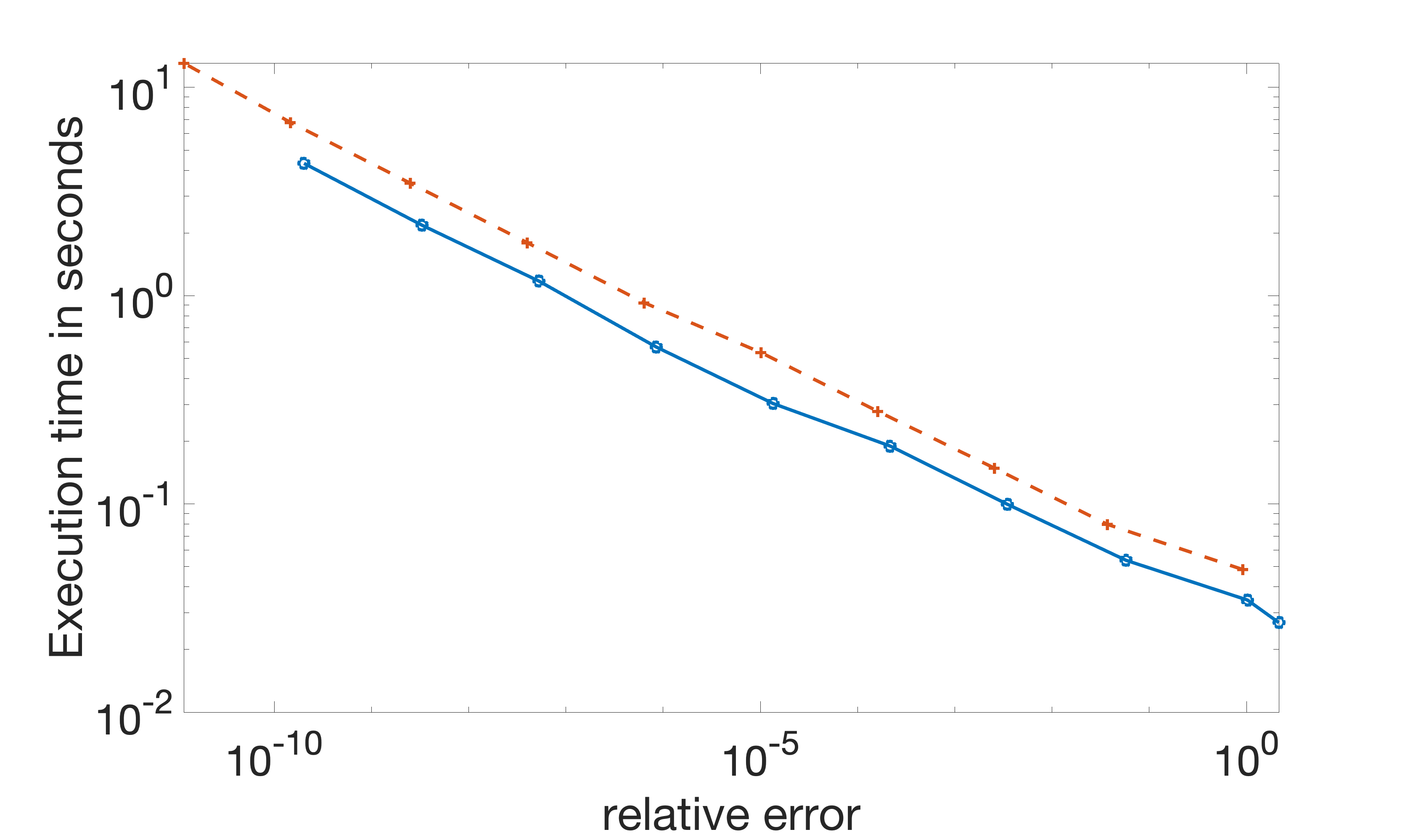}}
	\caption{Work precision diagrams for the fourth-order Euler-Maclaurin method (solid lines) and Gauss method (dashed lines)  applied to the pendulum problem (left picture) and to the Kepler problem (right picture).}
	\label{wpd_fig}
\end{figure}

\subsection{The Kepler problem}
\label{kepl}
This classical problem describes the motion of two  bodies subject to Newton's law of gravitation. As is well-known, the problem is a completely integrable Hamiltonian dynamical system with two degrees of freedom (see, for example, \cite{BI16}). If the origin of the coordinate system is set on one of the two bodies, the Hamiltonian function
$$
H(q_1,q_2,p_1,p_2) = \frac{1}{2}(p_1^2+p_2^2)-\frac{1}{\sqrt{q_1^2+q_2^2}},
$$
describes the motion of the other body, namely  an ellipse in the $q_1$-$q_2$ plane.
Taking as initial conditions
$$
q_1(0)=1-e, \quad q_2(0)=0, \quad p_1(0)=0, \quad p_2(0)=\sqrt{\frac{1+e}{1-e}},
$$
the trajectory describes an ellipse with eccentricity $e$ and is periodic with period $T=2\pi$.
Besides the total energy $H$, further  relevant first integrals  are the angular momentum
$$
M(q_1, q_2, p_1, p_2) = q_1p_2-q_2p_1.
$$
and the Lenz vector $A=(A_1,A_2,A_3)^\top$, whose components are
$$
A_1(q,p)=p_2M(q,p)-\frac{q_1}{||q||_2}, \quad A_2(q,p)=-p_1M(q,p)-\frac{q_2}{||q||_2}, \quad  A_3(q,p)=0.
$$
Of the four first integrals $H, M, A_1$, and $A_2$ only three are independent. Having set $e=0.6$ and $h=T/400$, we integrate the problem  
over $8 \cdot10^2$ periods for the fourth-order  methods and  $10^5$ periods for the  sixth-order methods and compute the error $\|y_n-y_0\|_1$   in the solution at specific times multiples of the period $T$, that is for $n=400k$, with $k=1,2,\dots$. Figures \ref{kepler4_fig} and \ref{kepler6_fig} report the obtained results for the fourth and sixth order Euler--Maclaurin (solid lines) and Gauss (dashed lines) methods. On the top-left picture is the absolute error of the numerical solution; the top-right picture shows the  maximum error  in the Hamiltonian function in each period; the error in the angular momentum, also evaluated in each period, is drawn in the bottom-left picture while the bottom-right picture concerns the  maximum error in each period of the Lenz vector. As is expected, we can see a linear drift in the error $\|y_n-y_0\|_1$  as the time increases.  The same linear growth is experienced in the Lenz invariant. Euler--Maclaurin methods assure a near conservation of the Hamiltonian function and angular momentum. This latter quadratic invariant is precisely conserved (up to machine precision) by Gauss methods due to their symplecticity property.

\begin{table}
	\centerline{
		\begin{tabular}{|c|cc|cc|}
			\hline
			&	\multicolumn{2}{c|}{order 4}  & 	\multicolumn{2}{c|}{order 6}  \\
			N &  error &  rate & error & rate \\
			\hline
			32    & 8.47e-03  & 4.10 & 2.59e-03  & 6.40\\
			64   &4.92e-04  & 4.01  & 3.07e-05  & 6.08\\
			128  &3.04e-05   &4.00  & 4.53e-07   &5.99\\
			256 &1.90e-06   &4.00  &7.10e-09   &6.00\\
			512   &1.18e-07   &4.00   &1.11e-10   &5.99\\  
			1024   &7.42e-09  & 4.00   &1.73e-12  & 5.77\\
			\hline
	\end{tabular}}
	\caption{Convergence rate of the error in the angular moment for the  fourth-order and sixth-order  Euler-Maclaurin methods. $N$ is the number of mesh points in each period, the error is computed over 10 periods.}
	\label{am_table}
\end{table}

\begin{figure}
	\centerline{
		\includegraphics[width=8.0cm,height=4cm]{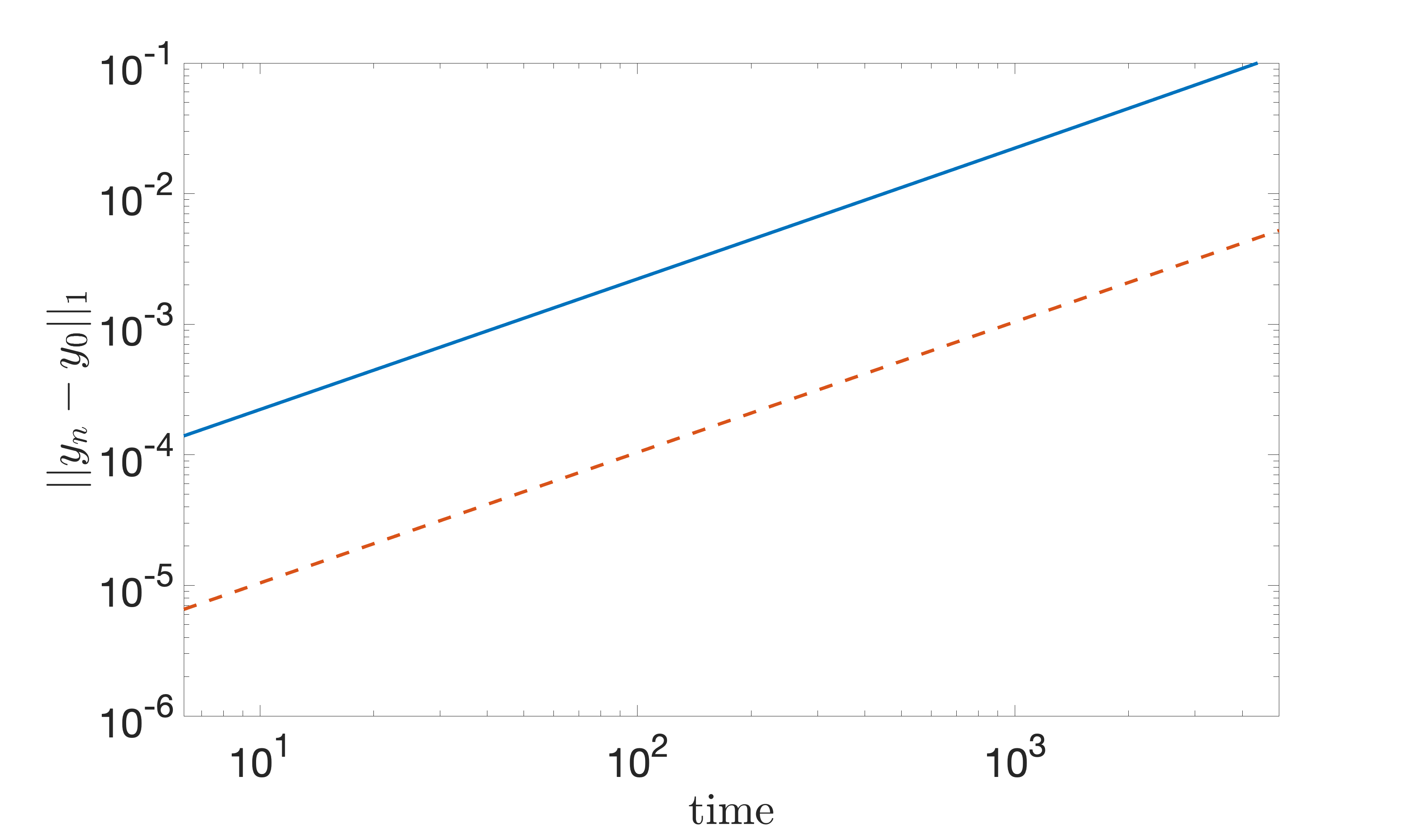} \hspace*{-.6cm}
		\includegraphics[width=8.0cm,height=4cm]{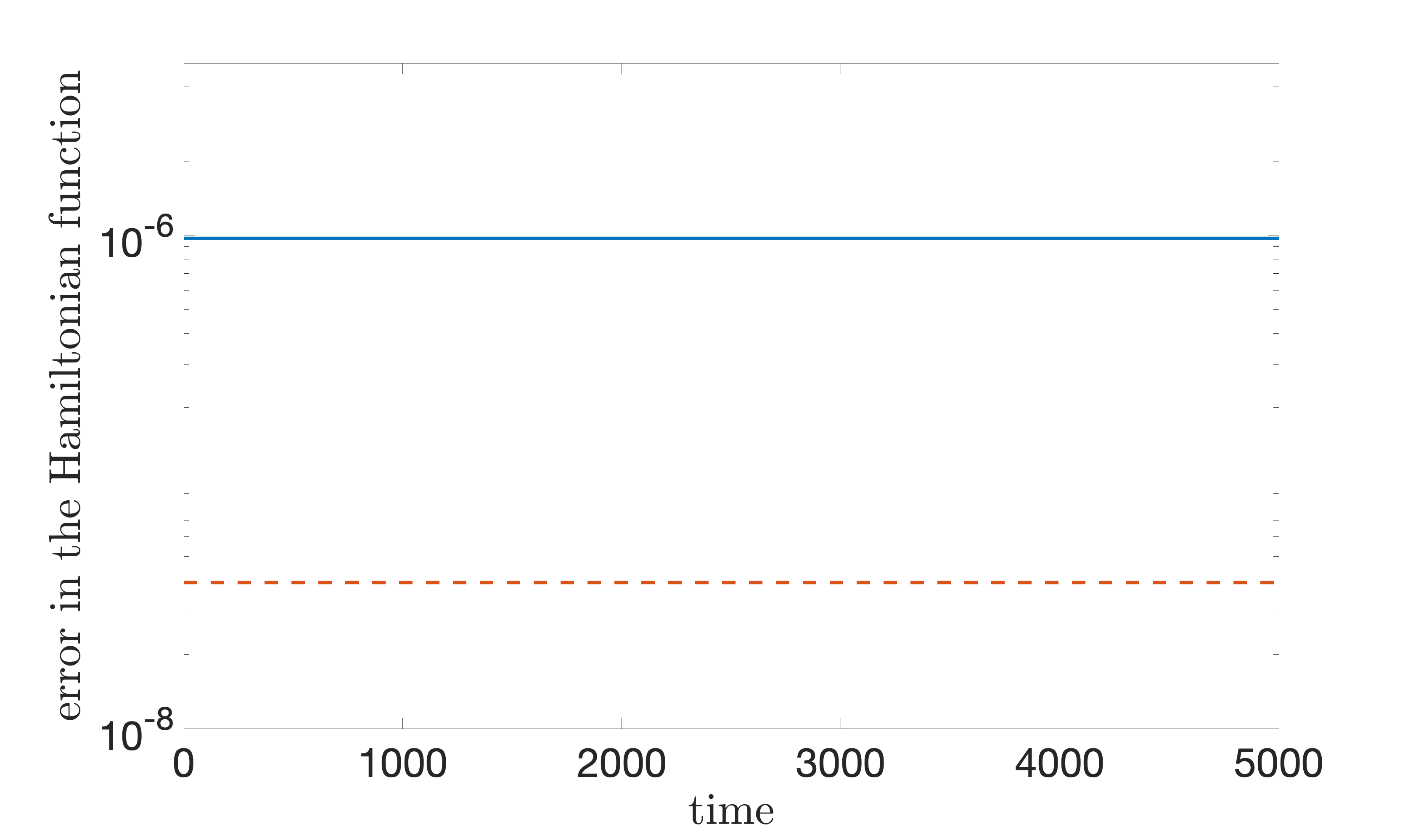}}
	\centerline{
		\includegraphics[width=8.0cm,height=4cm]{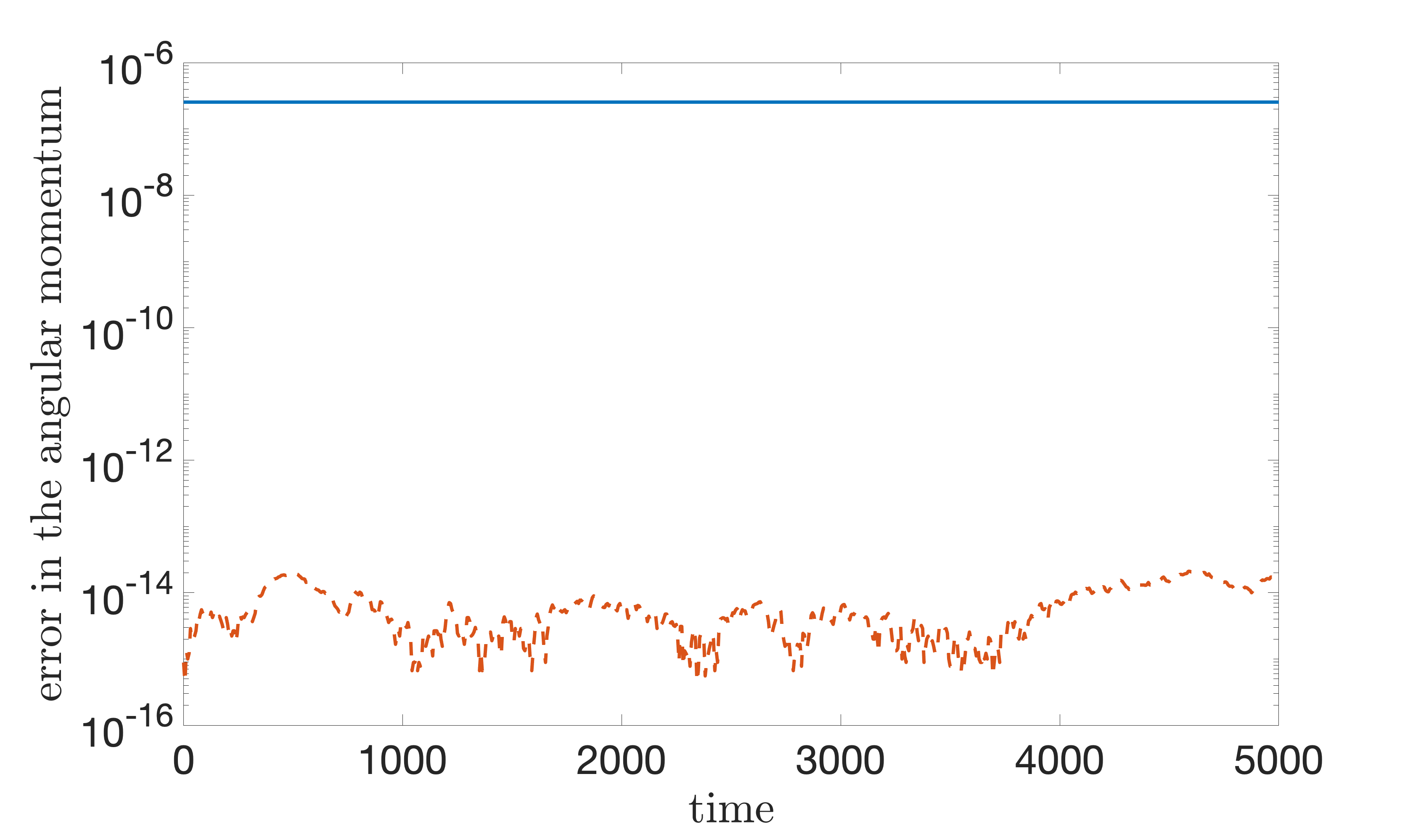} \hspace*{-.6cm}
		\includegraphics[width=8.0cm,height=4cm]{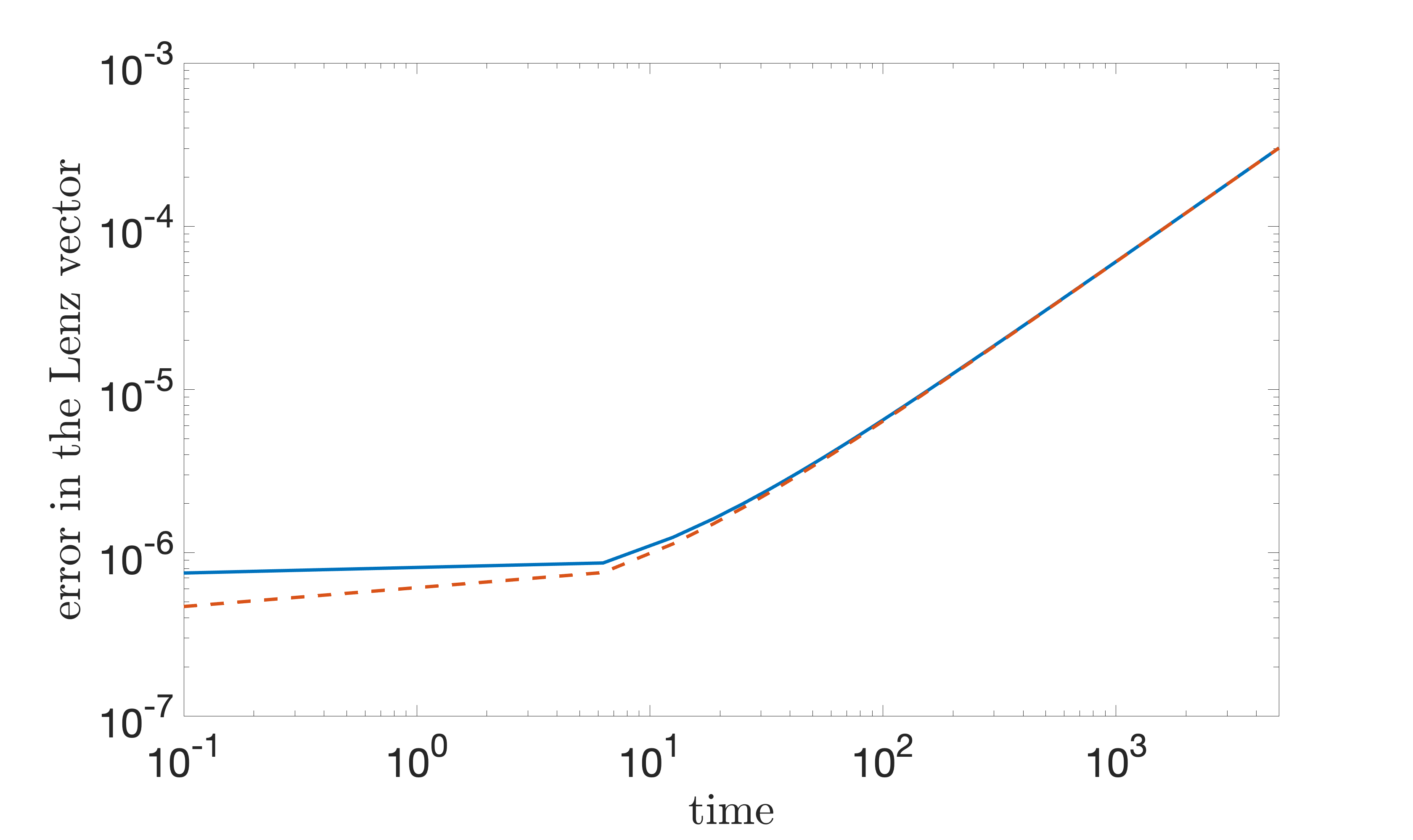}}
	\caption{Results for the fourth-order Euler-Maclaurin method (solid lines) and Gauss method (dashed lines) applied to the Kepler problem.}
	\label{kepler4_fig}
\end{figure}
\begin{figure}
	\centerline{
		\includegraphics[width=8.0cm,height=4cm]{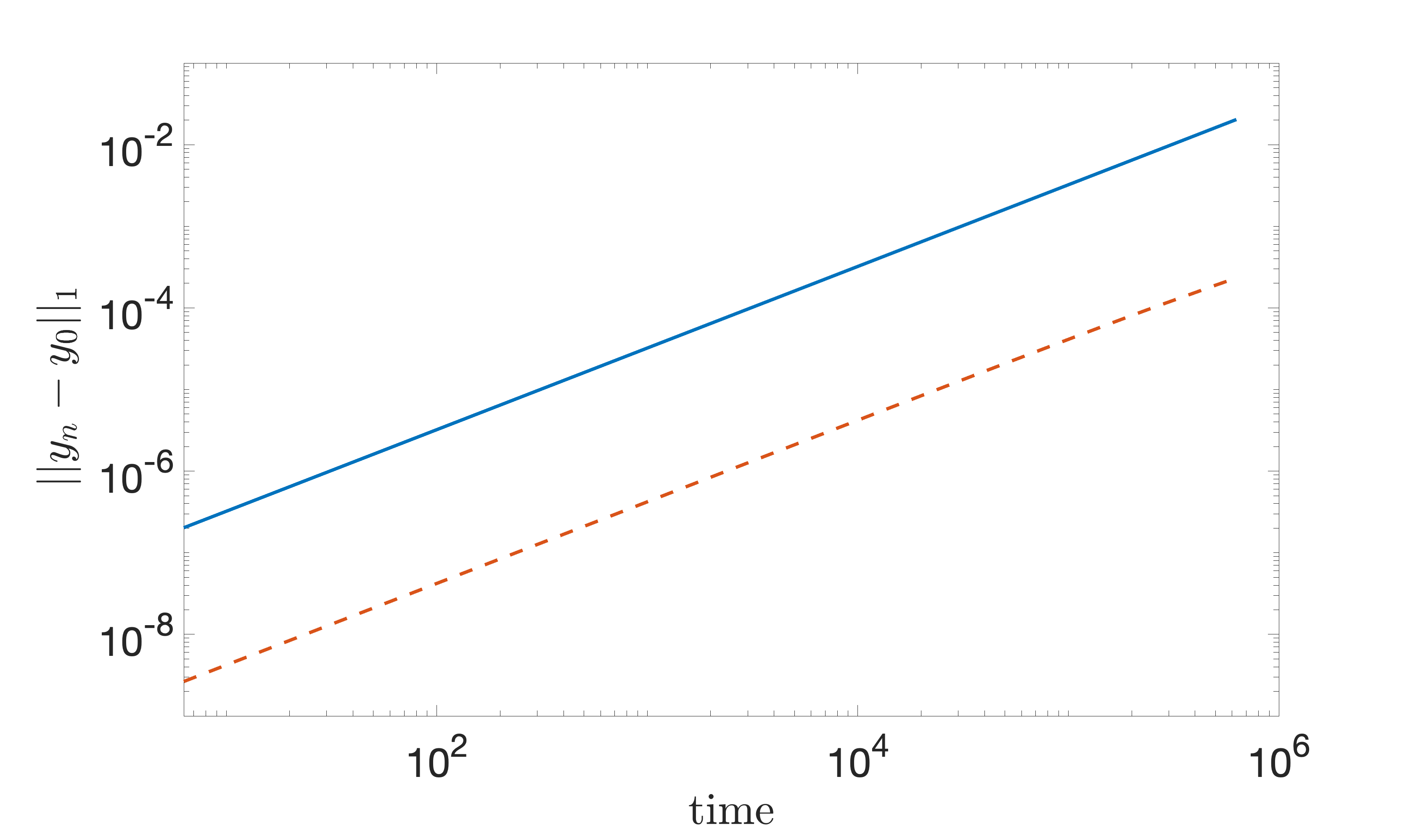} \hspace*{-.6cm}
		\includegraphics[width=8.0cm,height=4cm]{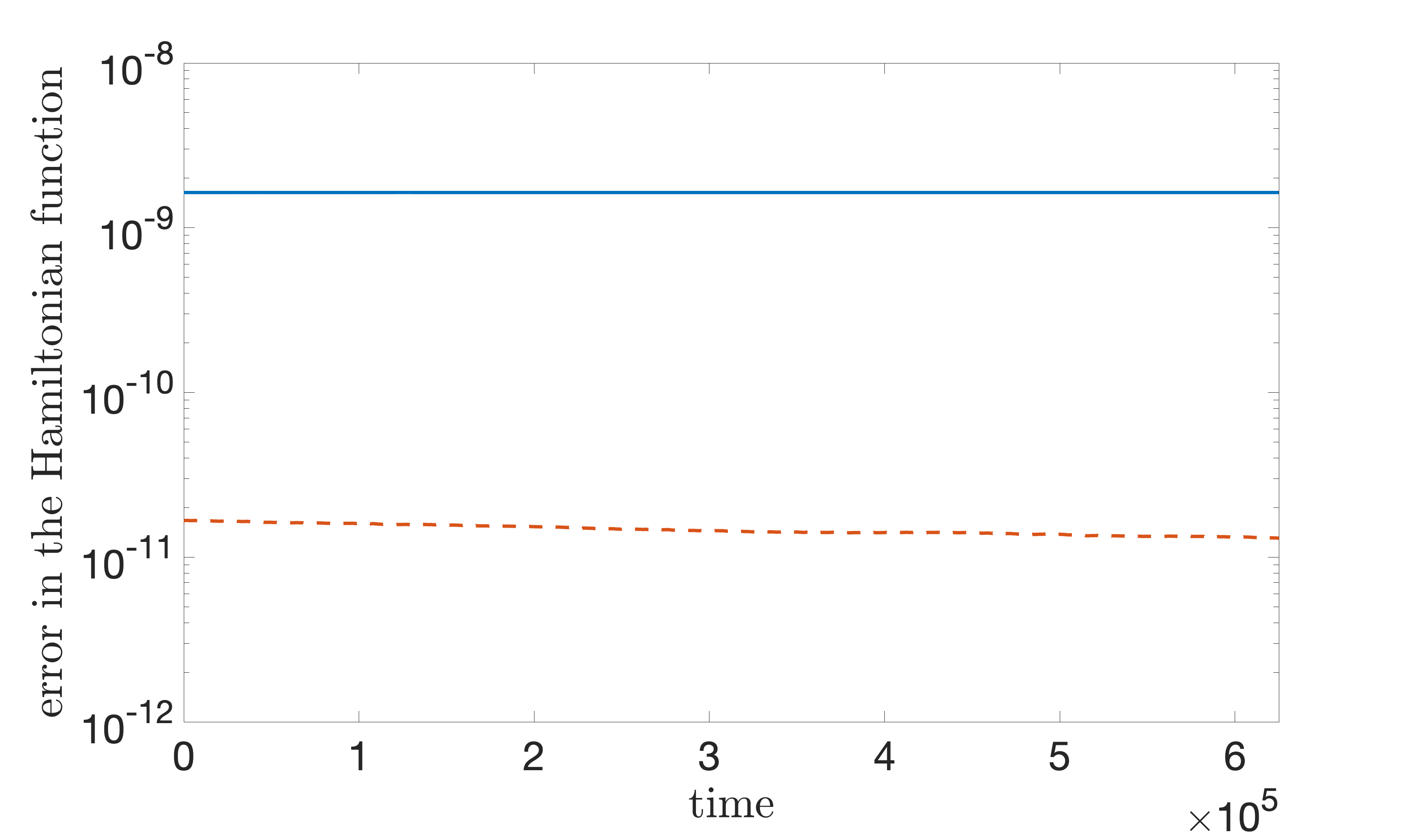}}
	\centerline{
		\includegraphics[width=8.0cm,height=4cm]{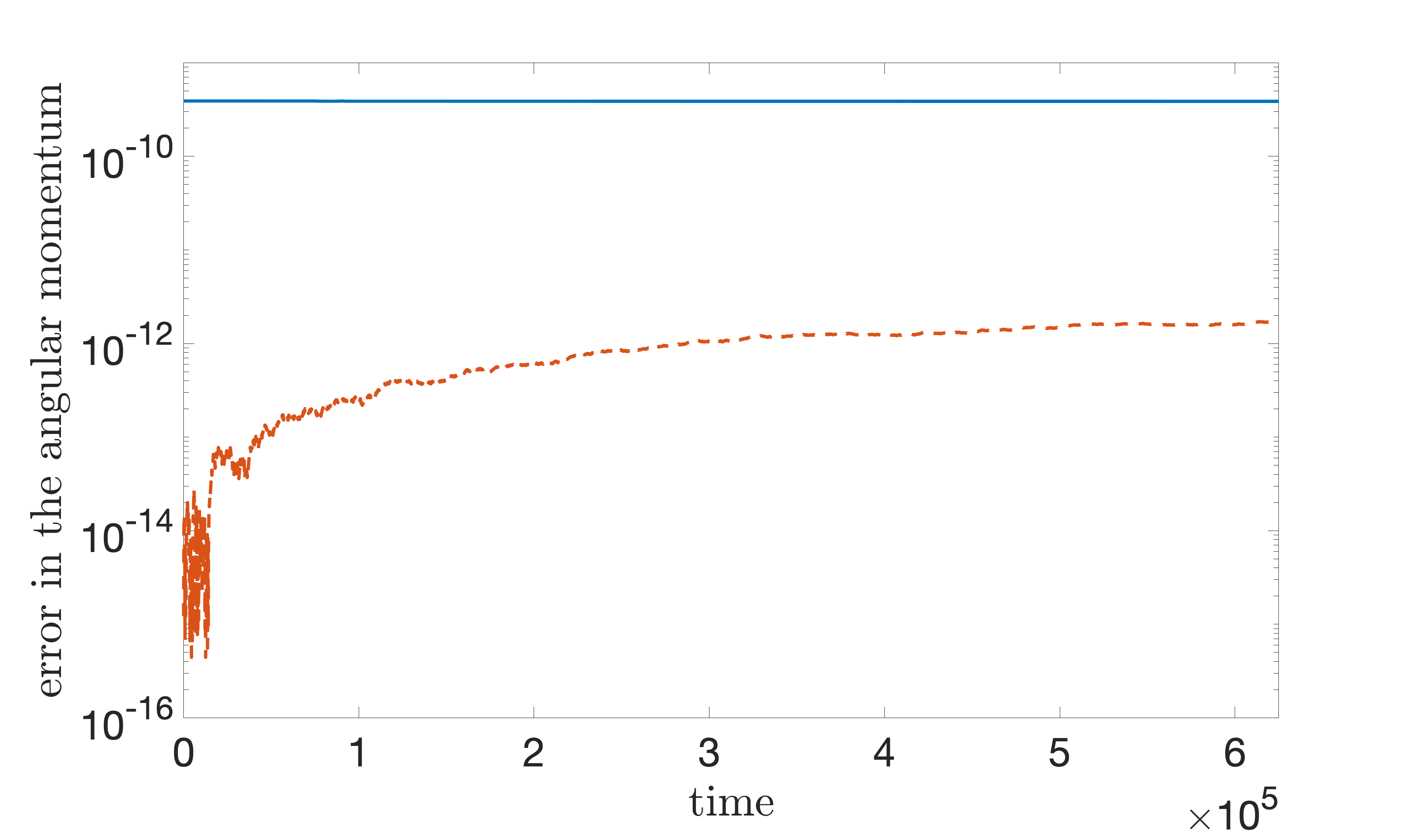} \hspace*{-.6cm}
		\includegraphics[width=8.0cm,height=4cm]{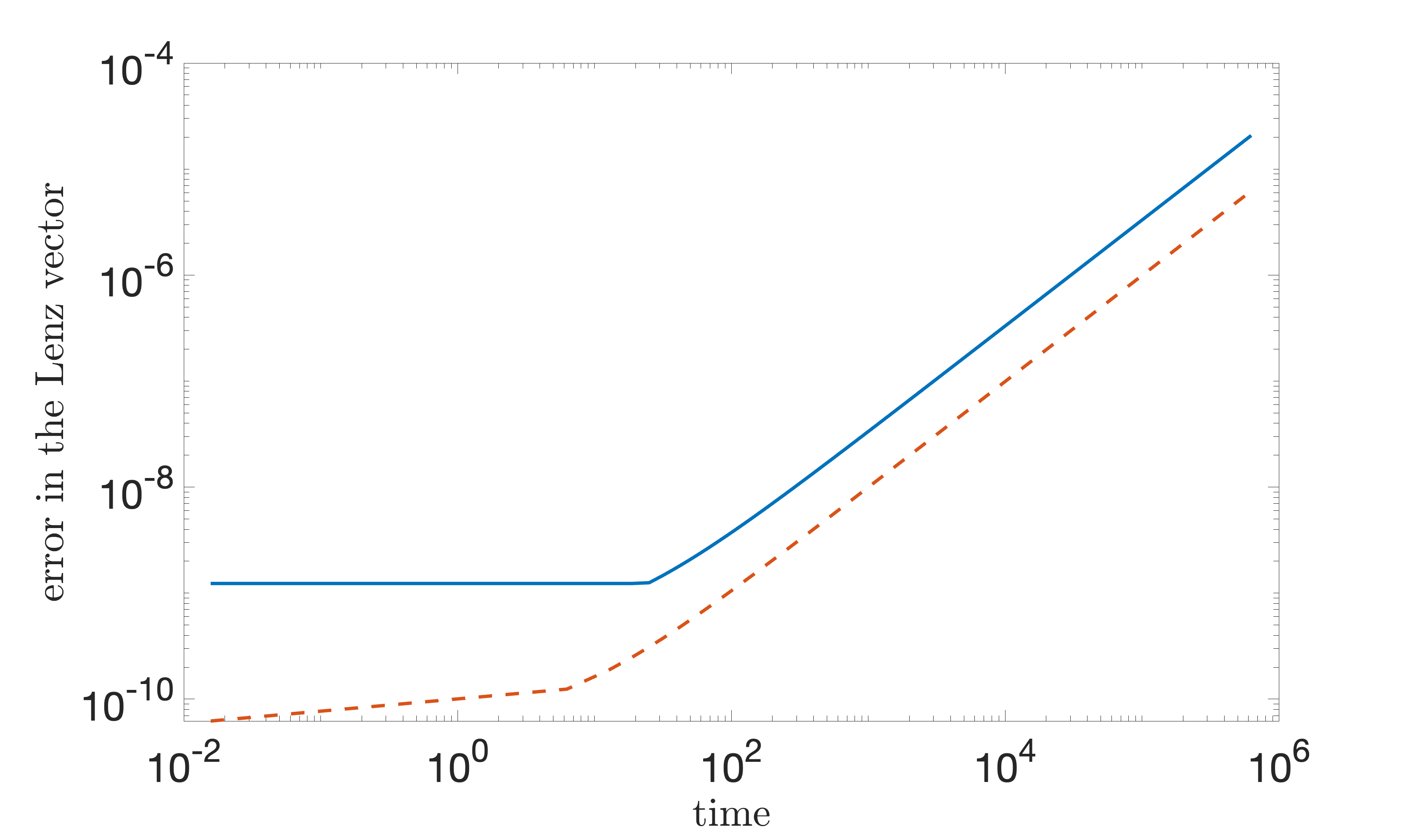}}
	\caption{Results for the sixth-order Euler-Maclaurin method (solid lines) and Gauss method (dashed lines) applied to the Kepler problem.}
	\label{kepler6_fig}
\end{figure}

\begin{figure}
	\centerline{
		\includegraphics[width=8.0cm,height=4cm]{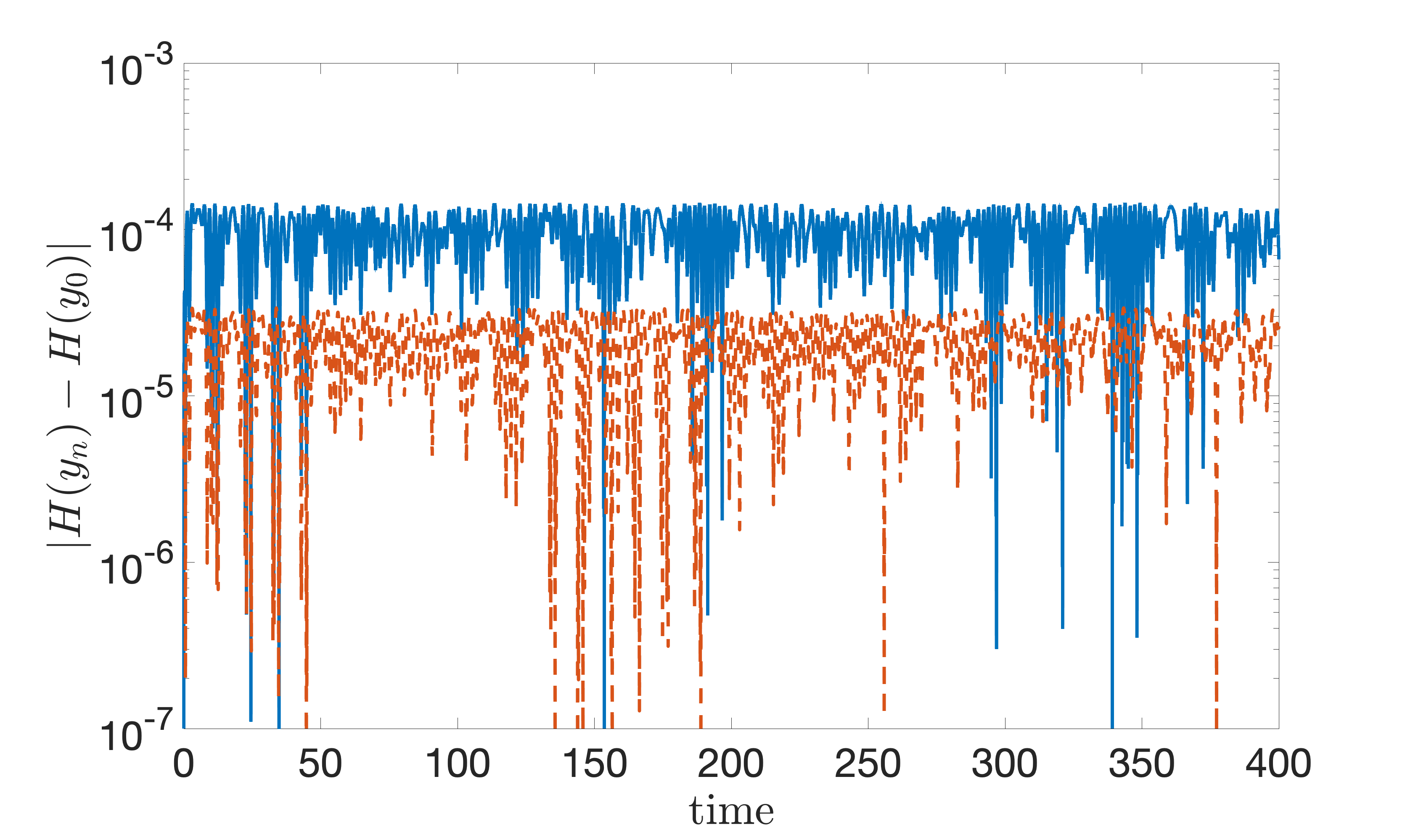} \hspace*{-.6cm}
		\includegraphics[width=8.0cm,height=4cm]{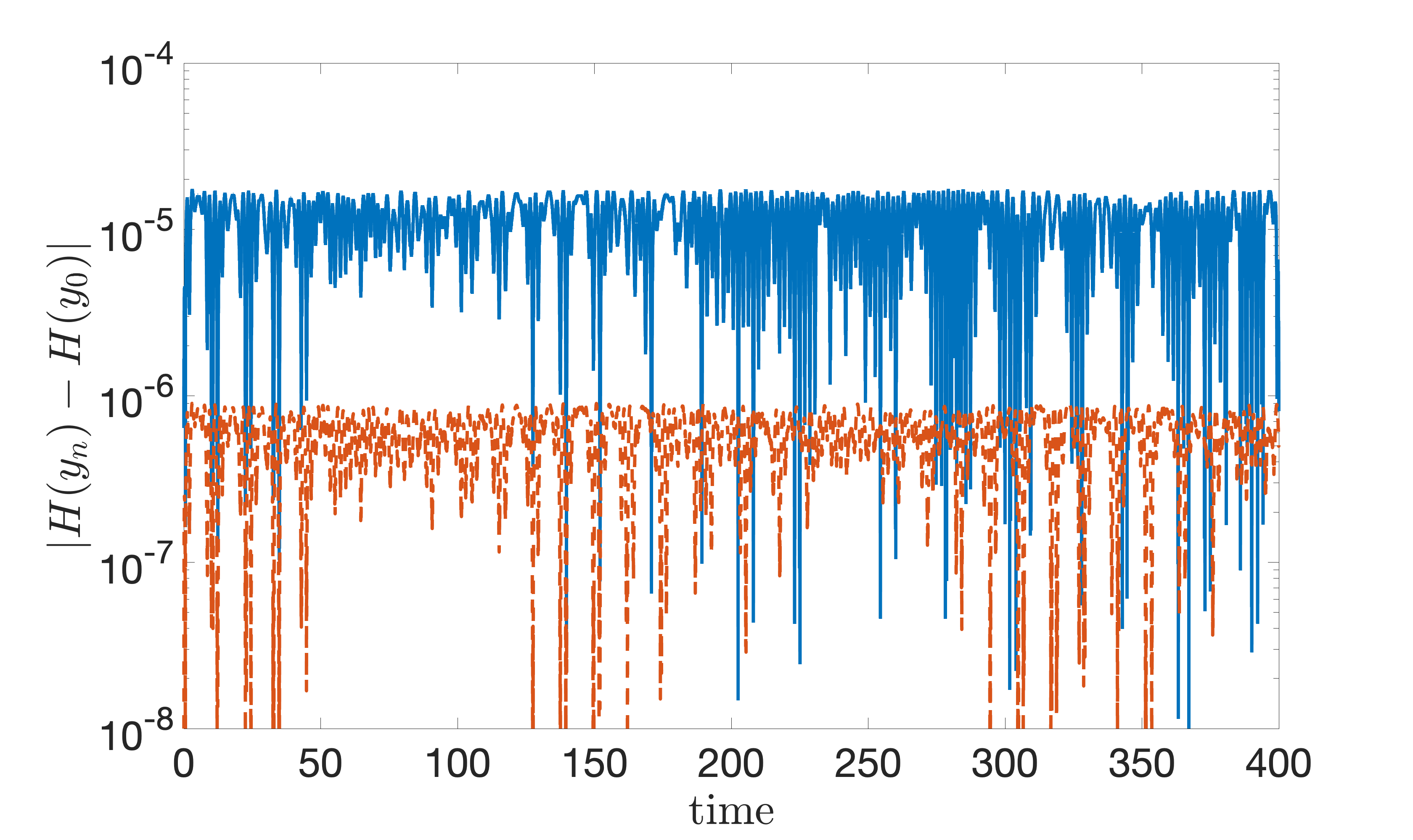}}
	\caption{Error in the Hamiltonian function \eqref{fpuH} generated by the Euler Maclaurin methods (solid line) and Gauss methods (dashed line) of order 4 (left picture) and order 6 (right picture) applied to the Fermi-Pasta-Ulam problem.}
	\label{fpuH_fig}
\end{figure}

\begin{figure}
	\centerline{
		\includegraphics[width=8.0cm,height=4cm]{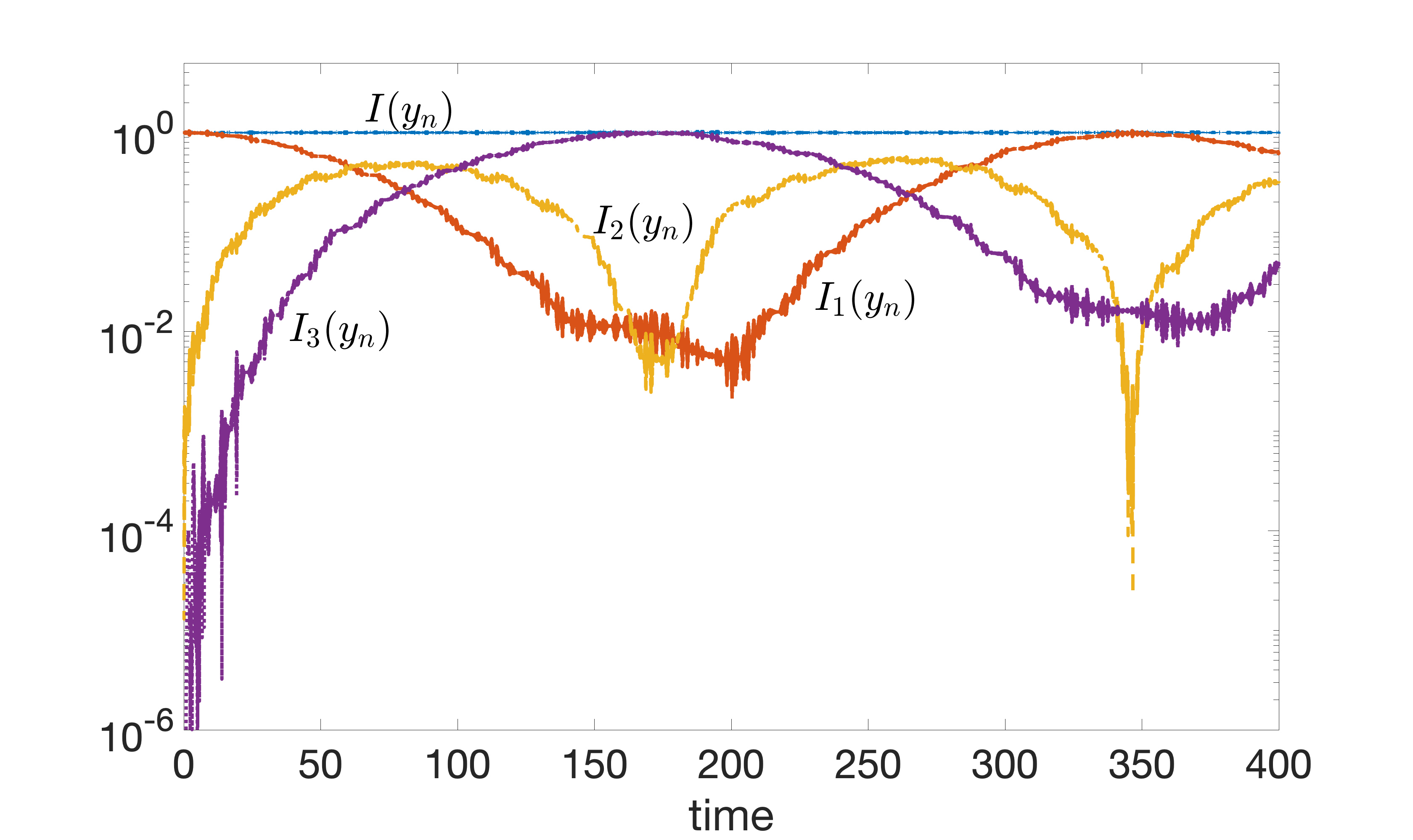} \hspace*{-.6cm}
		\includegraphics[width=8.0cm,height=4cm]{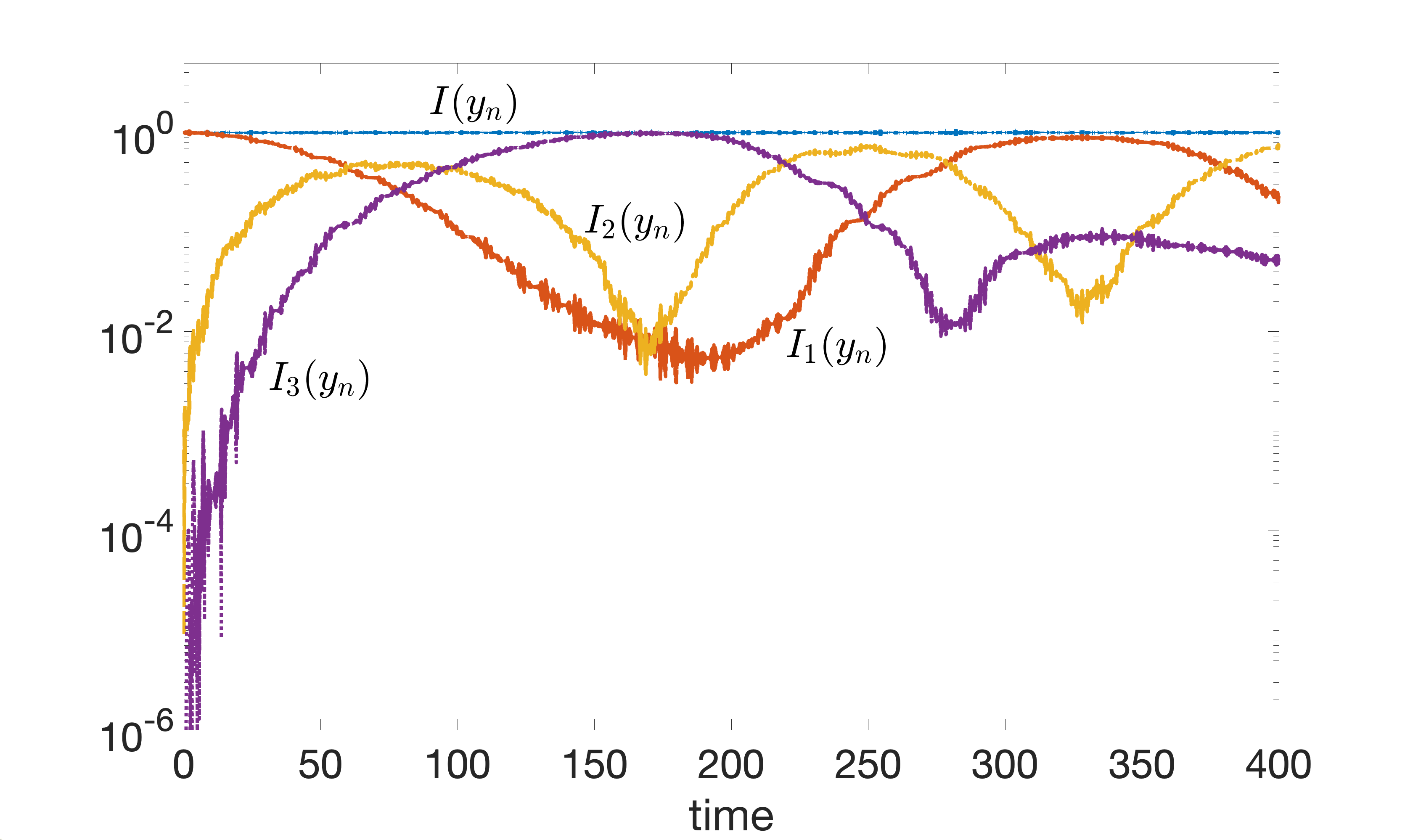}}
	\centerline{
		\includegraphics[width=8.0cm,height=4cm]{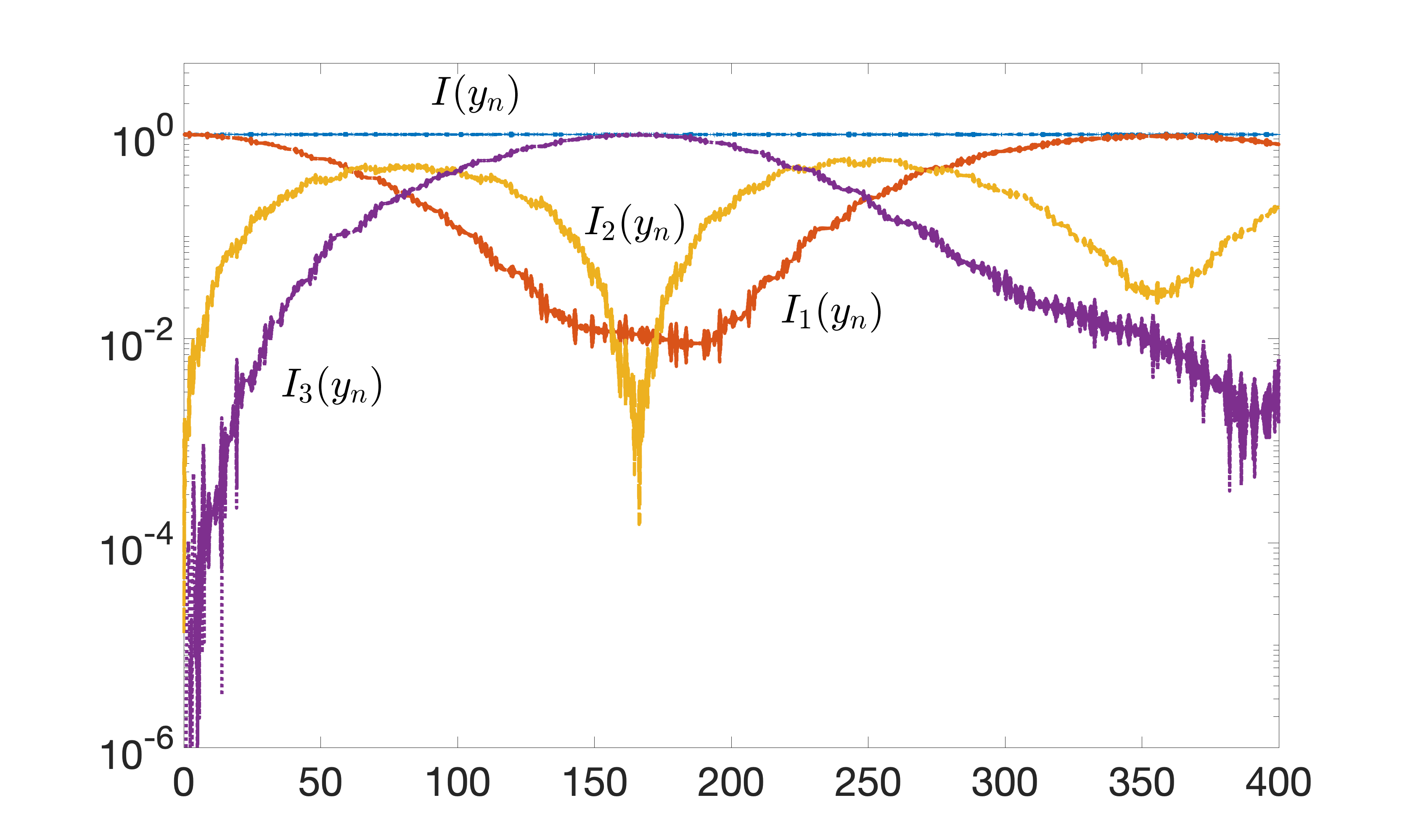} \hspace*{-.6cm}
		\includegraphics[width=8.0cm,height=4cm]{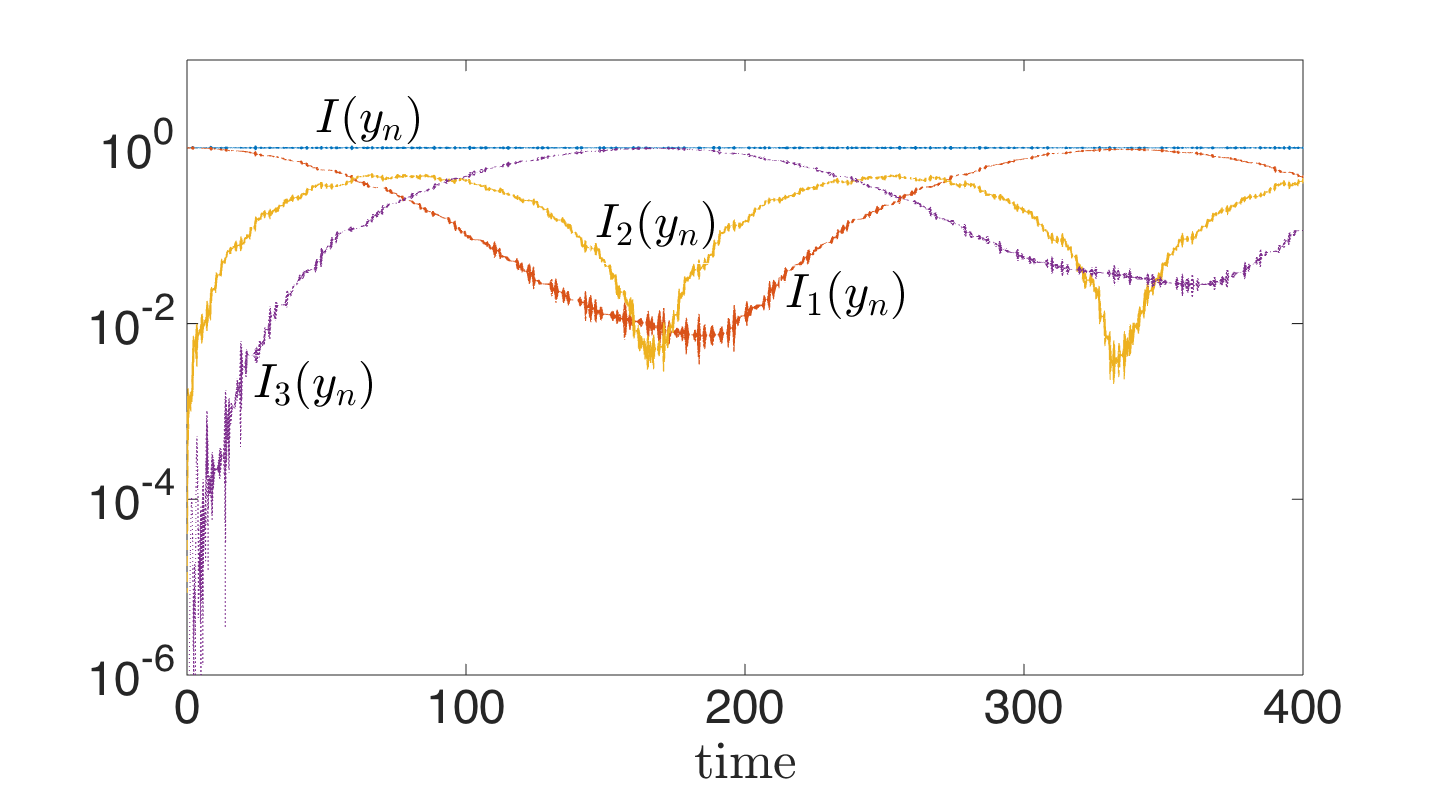}}
	\caption{Fermi-Pasta-Ulam problem: energy functions $I_i(t)$ associated with each linear spring and their sum $I(t)$ computed by Euler-Maclaurin methods (left pictures) and Gauss methods (right pictures). The upper and bottom pictures refer to the formulae of order four and six respectively.}
	\label{fpuI_fig}
\end{figure}
{Finally, also for this problem, we have run the fourth-order Euler-Maclaurin and Gauss methods for decreasing values of the stepsize over $10$ periods, and stored the corresponding errors and execution times. The work precision diagram in the right picture of  Figure \ref{wpd_fig} collects the obtained results and  shows a very similar behavior of the two integrators. The convergence rate of the maximum absolute error in the angular moment,   computed over the all integration interval,  is consistent with the order of the methods, testifiyng the near conservation of this first quadratic integral, see Table  \ref{am_table}.}

\subsection{Fermi-Pasta-Ulam problem}
\label{FPU}
The  Fermi-Pasta-Ulam problem models a physical system composed by $2m$ unit point masses disposed along a line and chained together by alternating weak nonlinear springs and stiff linear springs\cite{BI16,HLW06}.  The force exerted by the nonlinear springs are assumed proportional to the cube of the displacement of the associated masses. Denoting by $q_1$, $q_2$, $\dots$, $q_{2m}$ the displacements of the  masses from their rest points and assuming the endpoints of the external springs to be fixed, $q_0=q_{2m+1}=0$, the resulting Hamiltonian problem is defined by the energy function
\begin{equation}
\label{fpuH}
H(q,p) = \frac{1}2\sum_{i=1}^m (p_{2i-1}^2+p_{2i}^2)+\frac{\omega^2}{4}\sum_{i=1}^m(q_{2i}-q_{2i-1})^2+\sum_{i=0}^m(q_{2i+1}-q_{2i})^4,
\end{equation}
where $p_i=\dot q_i$, $i=1,\dots,2m$ are the conjugate momenta, and $\omega$ is the stiffness coefficient of the linear springs. Following the discussion in \cite[page 22]{HLW06}, we introduce the energy $I_i$ associated with the $i$th linear spring
$$
I_i= \frac{1}{4}\left( (p_{2i}-p_{2i-1})^2 +\omega^2 (q_{2i}+q_{2i-1})\right).
$$
The total energy $I(t)=I_1+\dots+I_m$ brought by the linear springs satisfies $I(t)=I(t_0)+O(\omega^{-1})$, so that it is  almost conserved for large values of the stiffness coefficient $\omega$. In our experiments we choose $m=3$ and $\omega=50$ and integrated  the problem on the interval $[0,400]$ with stepsize $h=0.03$  and the initial values $p_0 = (0, \sqrt{2}, 0, 0, 0, 0)$ and $q_0 = ( \frac{\sqrt{2}}{2}-\frac{\sqrt{2}}{2w}, \frac{\sqrt{2}}{w}+\frac{\sqrt{2}}{2}-\frac{\sqrt{2}}{2w}, 0, 0, 0, 0)$.  The two pictures in Figure \ref{fpuH_fig} display the absolute error in the Hamiltonian function  \eqref{fpuH} evaluated along the numerical solution produced by Euler--Maclaurin (solid line) and Gauss (dashed line) methods. On the left are the results for the fourth-order formulae, while on the right are the results for the sixth order formulae. We can see that, for both methods, the Hamiltonian function is nearly conserved. The pictures in Figure \ref{fpuI_fig} suggest that the very same conclusions apply to the nearly conserved quantity  $I(t)$ above defined: there is an exchange of energy among the linear modes but the total energy does not exhibit any drift.

\subsection{A non-separable Hamiltonian problem}
As last example, we illustrate the behavior of the Euler-Maclaurin formulae on the non-separable problem defined by the Hamiltonian function
\begin{equation}
\label{cassini_prob}
H(q,p)= (q^2+p^2)^2-2a^2(q^2-p^2).
\end{equation}
The level curves defining the trajectories in the phase plane are the well-known Cassini ovals (see, for example, \cite[page 53]{BI16}).  The shape of the orbit originating from a point $(q_0,p_0)$ depends on the value of the quantity $r=(1+H(q_0,p_0)/a^2)^{1/4}$. The three possible cases are summarized in the left picture of Figure \ref{cassini_1_fig}. For $r<1$, the orbit is a loop surrounding one of the foci $F_1=(-a,0)$ or $F_2=(a,0)$ and entirely lying on the semi-plane $q>0$ or $q<0$, respectively. For $r>1$, the orbit is an oval or bone-shaped loop  embracing both the foci. The {figure-eight}  level curve corresponding to $r=1$ is the lemniscate of Bernoulli and acts as a separatrix between the two possible dynamics described above. 

Consequently, a correct reproduction of the orbit when $H(q_0,p_0)\approx 0$ requires the use of an integrators with good geometric properties. To show that this is indeed the case, in the right picture of Figure \ref{cassini_1_fig} we display the orbits originating from the point $(q_0,p_0)=(0,10^{-2})$ computed by the Euler-Maclaurin formula of order 4, and by the explicit and implicit Taylor methods of the same order. This latter multi-derivative method is defined by considering a truncated Taylor expansion of $f(y(t))$ at time $t_{n+1}=t_n+h$ thus generalizing the implicit Euler method. The value of the Hamiltonian function associated with the true solution is  $H(q_0,p_0)\approx 5\cdot 10^{-4}$ corresponding to a value of $r \approx 1.00005 >1$; the integration time interval is $[0,45]$, while the used stepsize is $h=1.5\cdot10^{-2}$. Both Taylor methods exhibit a dramatic drift in the Hamiltonian function thus failing to yield the correct shape of the orbit even for short times. On the contrary, the near conservation of (\ref{cassini_prob}) along the numerical solution generated by the Euler-Maclaurin integrator guarantees a very accurate orbit determination over long simulation times. 

This feature is further confirmed by the results displayed in Figure \ref{cassini_2_fig} where we compare the qualitative behavior of the fourth-order Euler-Maclaurin and Gauss methods in reproducing the orbit originating at  $(q_0,p_0)=(0,10^{-6})$. This orbit is characterized by a period  $T\approx3.131990057003955$ and a  value of $r\approx 1+10^{-13}$ and thus is very close (but external) to the lemniscate (see \cite[page 135]{BI16}). We solved  the problem over the time interval $[0,5\cdot 10^4 T]$ by using a stepsize $h=2.5\cdot 10^{-3} T$, corresponding to $400$ mesh points in each period. The left picture of Figure  \ref{cassini_2_fig} contains the two undistinguishable numerical trajectories in the phase plane together with two close-ups  showing that, for both methods, the computed orbit is correctly bounded away from the origin. As is clear from the right picture of  Figure \ref{cassini_2_fig}, this good asymptotic behavior is a consequence of the fact that the numerical Hamiltonian function $H(q_n,p_n)$ undergoes very small and bounded oscillation which prevent the trajectory to cross the lemniscate at any observed time.


\begin{figure}
	\centerline{
		\includegraphics[width=8.0cm,height=4cm]{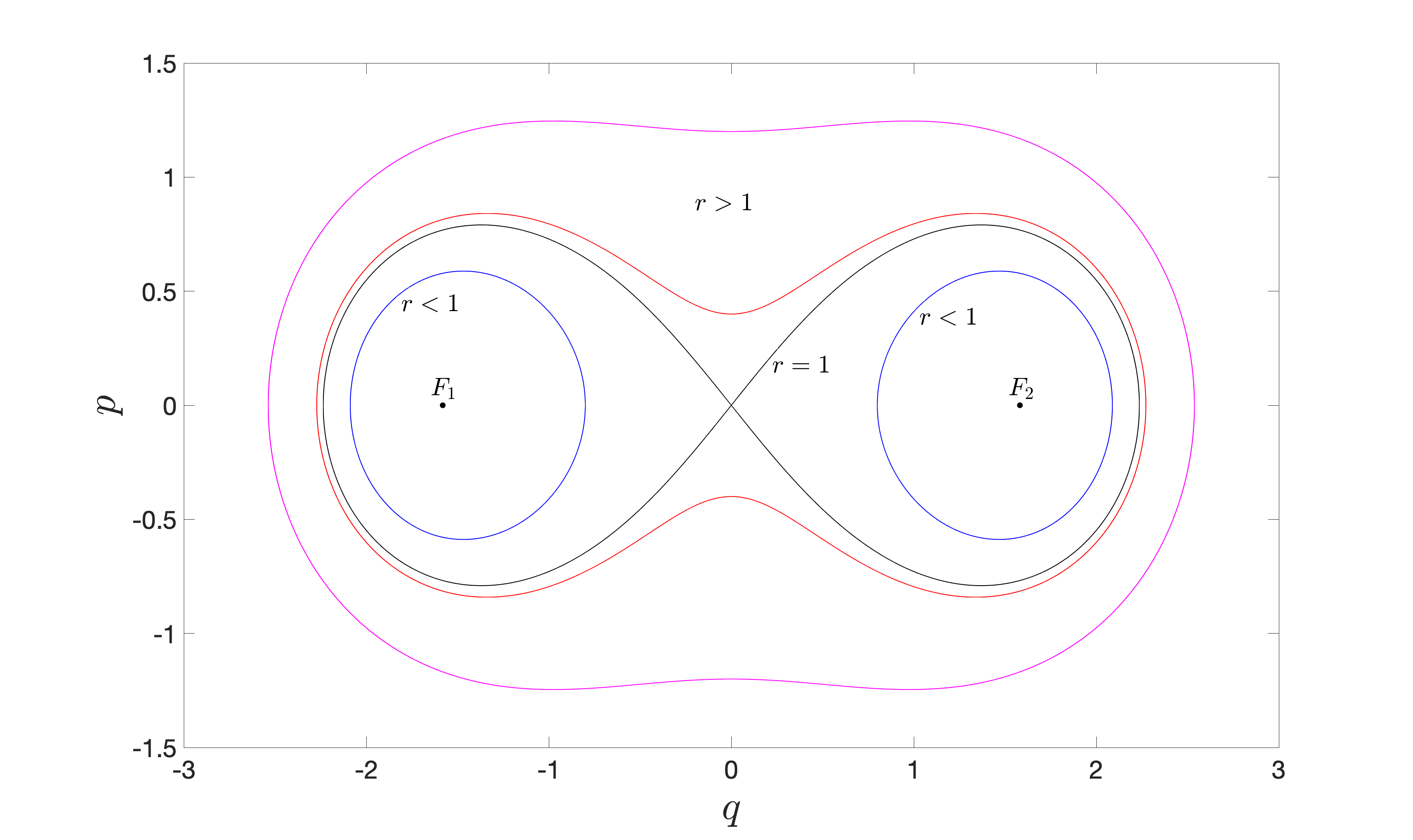} \hspace*{-.6cm}
		\includegraphics[width=8.0cm,height=4cm]{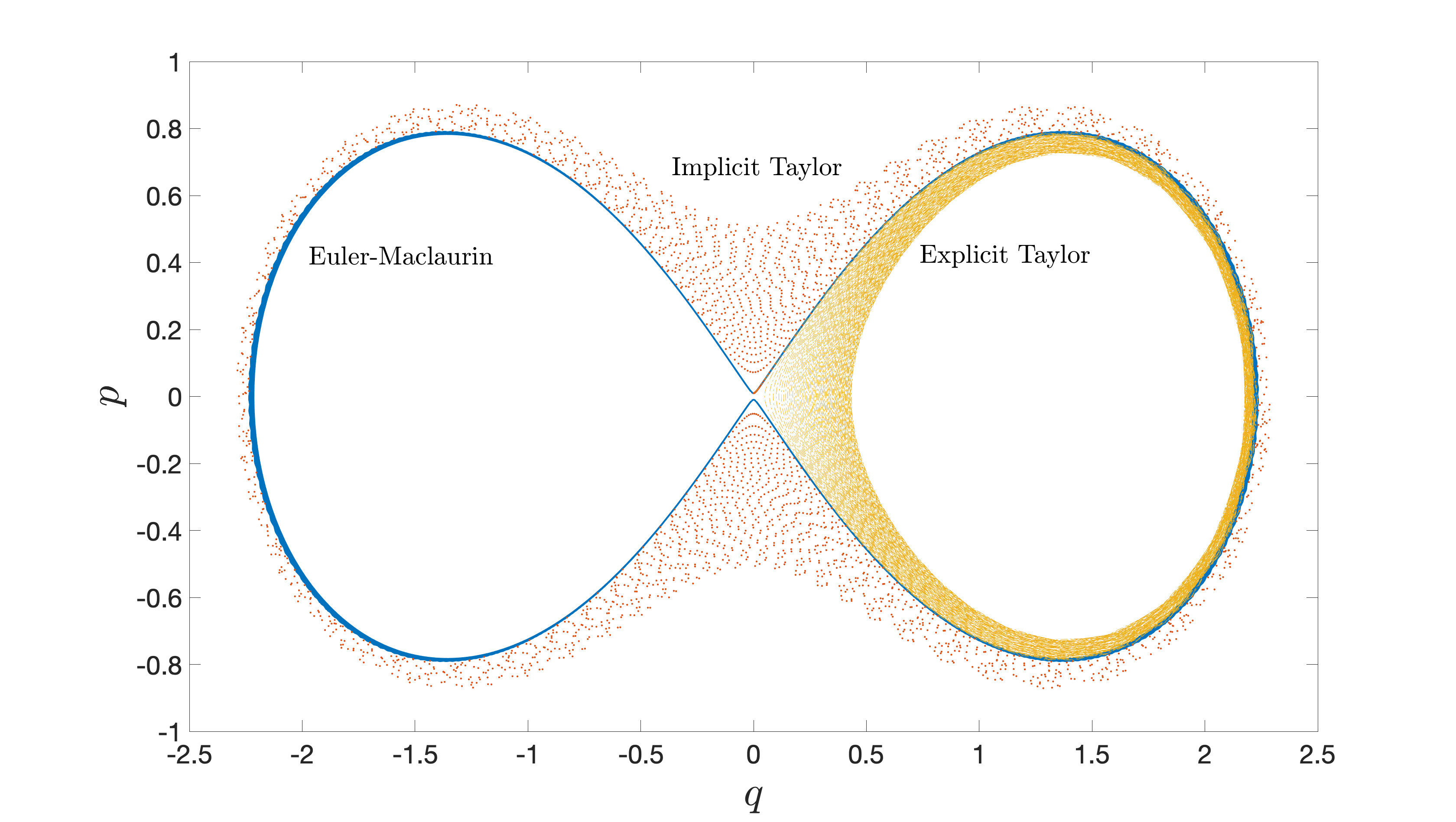}}
	\caption{Left picture: possible shapes of Cassini ovals for different values of the parameter $r$. Right picture: orbits generated by the fourth-order Euler-Maclaurin, explicit Taylor and implicit Taylor methods.}
	\label{cassini_1_fig}
\end{figure}

\begin{figure}
	\centerline{
		\includegraphics[width=8.0cm,height=4cm]{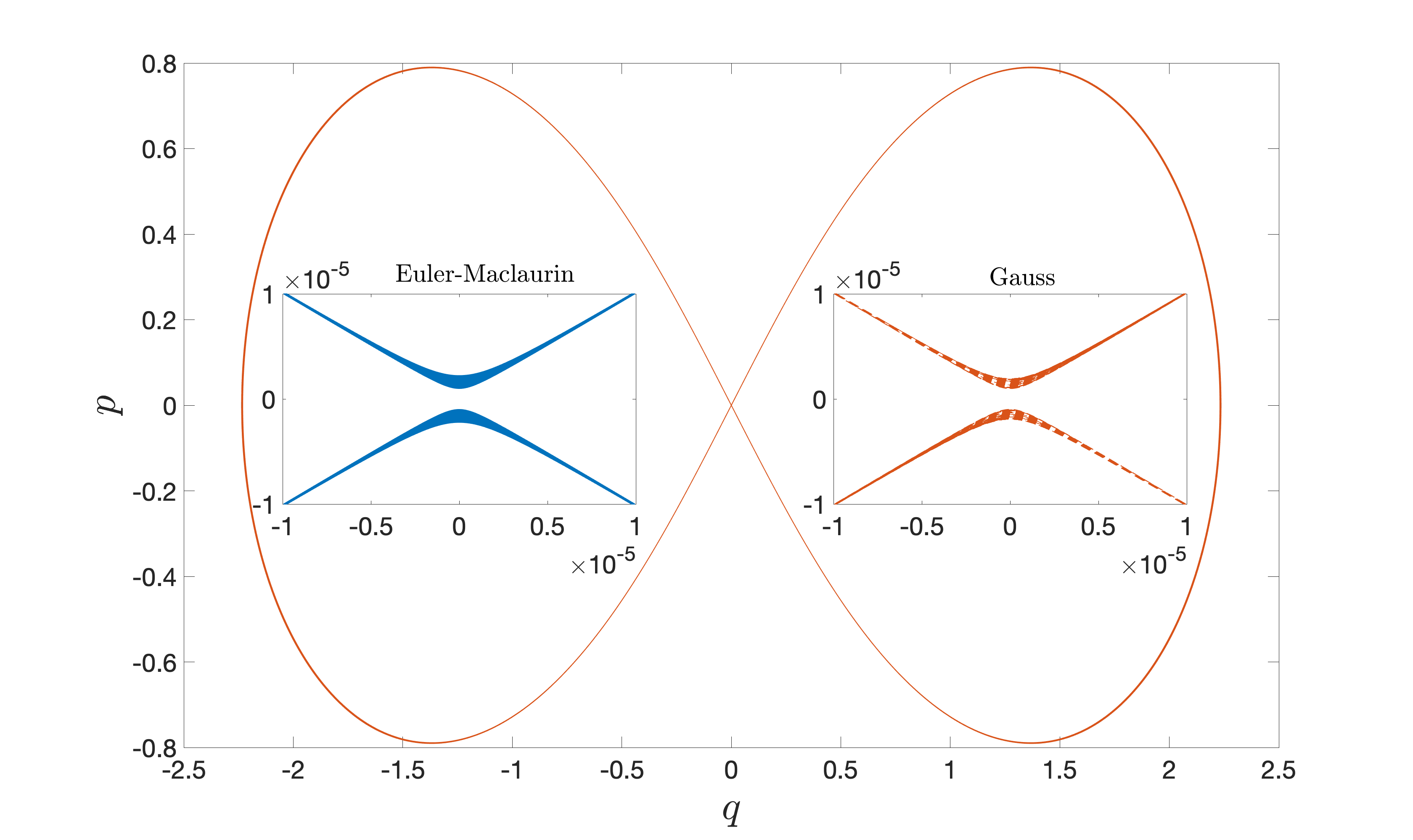} \hspace*{-.6cm}
		\includegraphics[width=8.0cm,height=4cm]{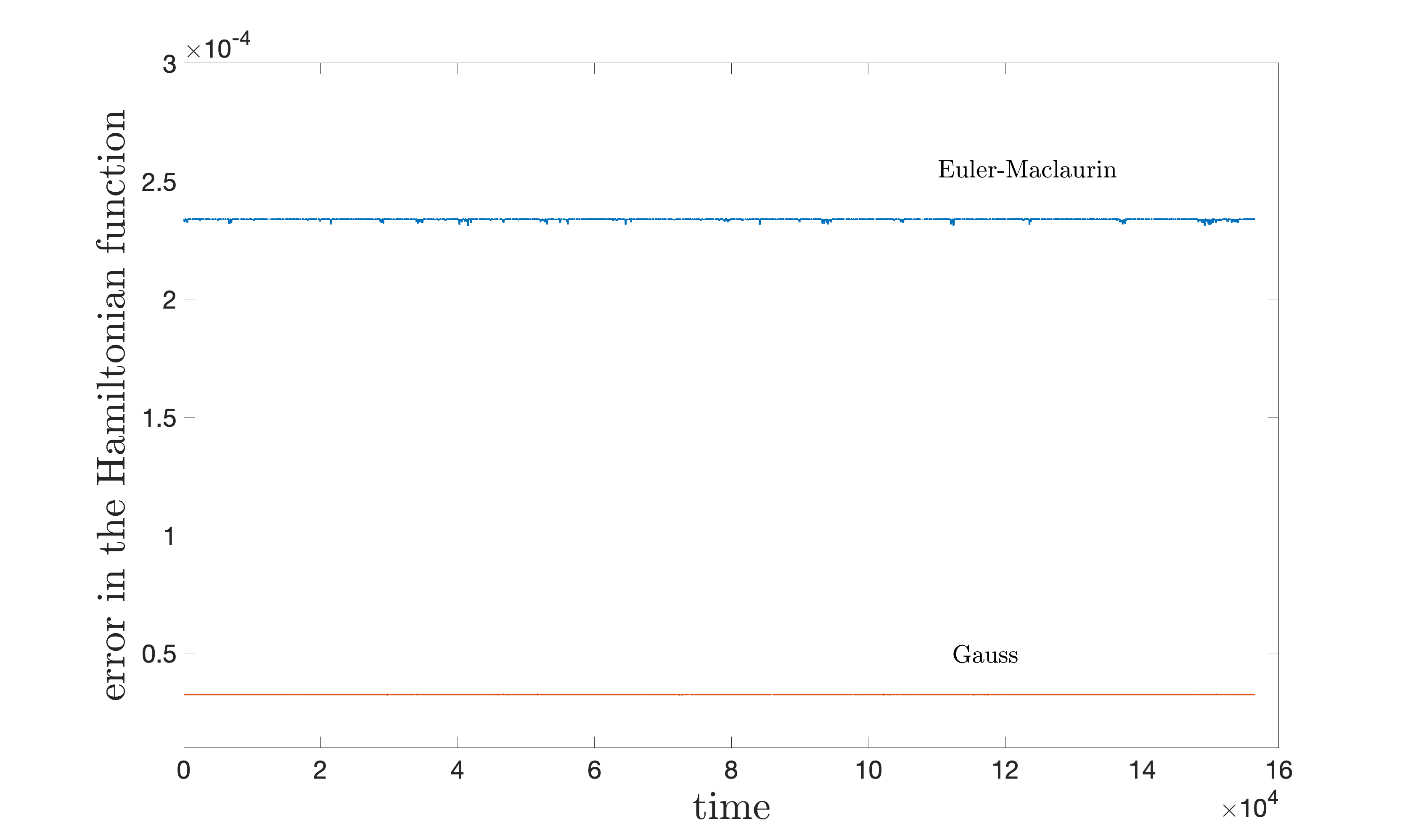}}
	\caption{Comparison between the  Euler-Maclaurin and Gauss methods of order four applied to problem (\ref{cassini_prob}
). Left picture: the orbits computed by the two integrators overlap with each other. The two close-ups show that their behavior is very similar in a neighbourhood of the origin. Right picture: error in the Hamiltonian function evaluated as the maximum absolute error in each period.}	\label{cassini_2_fig}
\end{figure}


\section{Conclusions} \label{sec:conc}
This paper studies the conservation properties of Euler--Maclaurin formulae and their implementation on the Infinity Computer. These are a family of multi-derivative one-step methods containing the classical trapezoidal method as seed formula.  The higher-order Euler--Maclaurin methods have even order $p=2s$, where $s$ denotes the maximum index of the involved derivatives of the vector field, and are topologically conjugate to a B-series symplectic formula up to the order $2s+2$. This property makes them  suitable for integrating canonical Hamiltonian systems over long times. A similar result, exploiting Theorem \ref{main_result}, has been recently derived for a class of Hermite-Obreshkov one-step methods  \cite{MS18}.

A new approach to compute the exact higher order derivatives using numerical infinities and infinitesimals is proposed. This new technique is simple, is able to work with black-box representations of the function $f(y)$ and avoids hard evaluations with tensors related to the function $f(y)$.  A comparison among this new approach and other known techniques, such as automatic differentiation, is beyond the scope of this paper and will be considered in a future work.

{\section*{Acknowledgement} We wish to thank two anonymous reviewers for careful reading our manuscript and for the valuable comments and suggestions they posted which improved the quality of the paper. We are also grateful to Ernst Hairer for helping us in fixing an issue in Theorem \ref{main_result}. }


\end{document}